\documentclass[12pt,a4paper,reqno]{amsart}

\usepackage[utf8]{inputenc}
\usepackage{amsthm, textcomp, amsmath, amsfonts,  amssymb }
\usepackage[pdftex]{graphicx}
\usepackage{url}
\usepackage{subfig}
\usepackage[colorlinks]{hyperref}
\usepackage{comment}
\usepackage[a4paper,left=3cm,right=3cm]{geometry}
\usepackage{tcolorbox}
\usepackage{hyperref}
\allowdisplaybreaks[4]

\begin{document}

\title[Summability and speed of convergence in an ergodic theorem]{Summability and speed of convergence\\ in an ergodic theorem}

\author[L. Colzani]{Leonardo Colzani}
\address[L. Colzani]{Dipartimento di Matematica,
Universit\`a degli Studi di Milano \mbox{Bicocca}, via Roberto Cozzi 55, Milano, Italy)}
\email{leonardo.colzani@unimib.it}

\author[B. Gariboldi]{Bianca Gariboldi}
\address[B. Gariboldi]{Dipartimento di Ingegneria Gestionale, dell'Informazione e della Produzione,
Universit\`a degli Studi di Bergamo, Viale Marconi 5, Dalmine BG, Italy}
\email{biancamaria.gariboldi@unibg.it}

\author[A. Monguzzi]{Alessandro Monguzzi}
\address[A. Monguzzi]{Dipartimento di Ingegneria Gestionale, dell'Informazione e della Produzione,
Universit\`a degli Studi di Bergamo, Viale Marconi 5, Dalmine BG, Italy}
\email{alessandro.monguzzi@unibg.it}

\begin{abstract}
Given an irrational vector $\alpha$ in $\mathbb{R}^{d}$, a continuous function $f(x)$ on the torus $\mathbb{T}^{d}$ and suitable weights $\Phi(N,n)$ such that  $\sum_{n=-\infty}^{+\infty}\Phi(N,n)=1$, we estimate the speed of convergence to the integral $\int_{\mathbb{T}^{d}}f(y)dy$ of the weighted sum $\sum_{n=-\infty}^{+\infty}\Phi(N,n)  f(x+n\alpha)$ as $N\rightarrow +\infty$.
% \[ \mathcal{D}_{N}^{\Phi, \alpha}f(x)  =\sum_{n=-\infty}^{+\infty}\Phi(N,n)  f(x+n\alpha)  -\int_{\mathbb{T}^{d}}f(y)dy. \]
Whereas for the arithmetic means 
$N^{-1}\sum_{n=1}^{N}f(x+n\alpha)$
the speed of convergence is never faster than $cN^{-1}$, for other means such speed can be accelerated. 
We estimate the speed of convergence in two theorems with different flavor. The first result is a metric one, and it provides an estimate of the speed of convergence in terms of the Fourier transform of the weights $\Phi(N,n)$ and the smoothness of the function $f(x)$ which holds for almost every $\alpha$. The second result is a deterministic one, and the speed of convergence is estimated also in terms of the Diophantine properties of the given irrational vector $\alpha\in\mathbb R^d$.

\end{abstract}

\subjclass[2010]{37A44, 37A46, 42B08}

\keywords{Kronecker sequences, Weyl sums, Ergodic theorem}
\thanks{The authors are members of GNAMPA - Istituto Nazionale di Alta Matematica (INdAM). The third author is partially supported by the Hellenic Foundation for Research and Innovation (H.F.R.I.) under the
“2nd Call for H.F.R.I. Research Projects to support Faculty Members \& Researchers” (Project Number: 73342).}
\maketitle

\newtheorem{Theorem}{Theorem}[section]
\newtheorem{Cor}[Theorem]{Corollary}
\newtheorem{lemma}[Theorem]{Lemma}
\newtheorem{definition}[Theorem]{Definition}
\newtheorem{prop}[Theorem]{Proposition}
\newtheorem{ex}[Theorem]{Example}
\theoremstyle{remark}
\newtheorem{remark}[Theorem]{Remark}

\newcommand{\nb}[3]{{\colorbox{#2}{\bfseries\sffamily\scriptsize\textcolor{white}{#1}}}
{\textcolor{#2}{\sf\small\textit{#3}}}}
\newcommand{\bianca}[1]{\nb{Bianca}{blue}{#1}}
\newcommand{\ale}[1]{\nb{Ale}{red}{#1}}
\newcommand{\leo}[1]{\nb{Leo}{cyan}{#1}}
\newcommand{\vecchio}[1]{\nb{vecchio}{brown}{#1}}

\section{Introduction}

 The motivation for this work comes from an attempt to estimate the speed of convergence in a classical ergodic theorem which we now describe. There are several results in the literature concerning this problem and here we only cite a few.
 
A classical result of L. Kronecker states that if $\alpha=(\alpha_1,\ldots,\alpha_d)\in\mathbb R^d$ is an irrational vector, that is, if $1,\alpha_1,\ldots,\alpha_d$ are linearly independent over the rationals, then the sequence $\{n\alpha\}_{n=1}^{+\infty}$ is dense in the torus $\mathbb{T}^{d}=\mathbb{R}^{d}/\mathbb{Z}^{d}$. This implies that for every continuous nonconstant function $f(x)$ on the torus the sequence $\{f(x+n\alpha)\}_{n=1}^{+\infty}$ does not have a limit as $n\to +\infty$. Another classical result obtained independently by P. Bohl, W. Sierpinski and H. Weyl states that the sequence $\{n\alpha\}_{n=1}^{+\infty}$ is uniformly distributed in the torus, and the arithmetic means of the sequence $\{f(x+n\alpha)\}_{n=1}^{+\infty}$ converge to the integral of the function,
\[
\lim_{N\to +\infty}\left\{  \dfrac{1}{N}\sum_{n=1}^{N}f(x+n\alpha)  -\int_{\mathbb{T}^{d}}f(y)dy\right\}  =0.
\]

For such classical facts we refer the reader, for instance, to \cite{KN} and \cite[Chapter 6]{Travaglini}. 
The map $x\mapsto x+\alpha$ is a measure preserving ergodic transformation whenever $\alpha\in\mathbb R^d$ is an irrational vector and the above results are particular cases of classical ergodic theorems. It is known that no general statement can be made about the rate of convergence in these theorems. In \cite{KP} and \cite{Krengel} it is proved that if $T$ is a measure preserving ergodic transformation of the interval $[0,1]$ and if $\{\varepsilon_{n}\}_{n=1}^{+\infty}$ is a positive sequence converging to 0, then there exists a continuous function $f(x) $ such that, for almost every $x$, one has 
\[
\limsup_{N\to +\infty}\left\{  \varepsilon_{N}^{-1}\left(  \dfrac{1}{N}\sum_{n=1}^{N}f(T^n(x))-\int_{0}^{1}f(y)dy \right) \right\}  =+\infty.
\]

Confirming a conjecture of Erd\"{o}s and Sz\"{u}s, in \cite{Kesten} and in \cite{Petersen} it is proved that if $f(x)$ is the characteristic function of an interval $\{  a\leq x\leq b\}  $, with $0<b-a<1$, then the quantity 
\[\sum_{n=1}^{N}f(x+n\alpha)  -N\int_{0}^{1}f(y)dy
\]
is bounded in $N$ if and only if $b-a=h\alpha-k$ for some integers $h$ and $k$. Therefore, for a characteristic function the speed of convergence $cN^{-1}$ is the exception, not the rule. For multidimensional analogues of such results see \cite{GL2, GL1}.

In \cite{HL} it is proved that if $f(x)  $ is a continuously differentiable function on $\{  0\leq x\leq 1\}  $ with $df(x)/dx$ Lipschitz continuous and with $f(  0) \neq f(  1)  $, then, for every $\alpha$, one has
\[
\limsup_{N\to+\infty}\left\{  \left| \sum_{n=1}^{N}f(n\alpha)  -N\int_{0}^{1}f(y)dy \right|\right\}  =+\infty.
\]

It is also proved that if $f(0)=f(1)$, hence $f(  x)  $ is continuous as a function on the torus $\mathbb{T}$, but the derivative may have a jump discontinuity, then, for almost every $\alpha$, one has
\[
\limsup_{N\to+\infty}\left\{  \sup_{0\leq x\leq 1}\left\{  \left| \sum_{n=1}^{N}f(x+n\alpha)  -N\int_{0}^{1}f(y)dy \right| \right\}  \right\}<+\infty.
\]

Observe that a discontinuous function cannot have an absolutely convergent Fourier expansion. On the other hand, the assumptions $f(0)  =f(  1)  $ and $df(x)/dx$ Lipschitz continuous in $\{  0\leq x\leq 1\}  $ imply that $\vert\widehat{f}(m)\vert \leq cm^{-2}$. More generally, if $df(x)/dx$ is H\"{o}lder continuous with exponent $\varepsilon>0$, then $\vert \widehat{f}(m)\vert \leq cm^{-1-\varepsilon}$. 
In \cite{DP} it is proved that if $\{  \alpha_{n}\}_{n=0}^{+\infty}$ is a van der Corput sequence on the interval $\{0\leq x\leq 1\}  $ and if the Fourier coefficients of the function $f(x)  $ have decay $\vert \widehat{f}(  m)\vert \leq c\vert m\vert ^{-1-\varepsilon}$ for some $\varepsilon>0$, then 
\[
\sup_{N\geq 1}\left\{\left| \sum_{n=1}^{N}f(  \alpha_{n})  -N\int_{0}^{1}f(x)dx\right|\right\} <+\infty.
\]

In \cite{BBH} it is proved that the expected speed of convergence of Weyl sums of continuous, or, more generally, square integrable functions, is slightly less than $N^{-1/2}$. More precisely, they proved that if $f(x)  $ is square integrable and if $\nu<1/2$ then, for almost every $(\alpha,x)  \in\mathbb{T}^{d}\times\mathbb{T}^{d}$, one has
\[
\limsup_{N\to+\infty}\left\{  N^{\nu}\left| \dfrac{1}{N}\sum_{n=1}^{N}f(x+n\alpha)  -\int_{\mathbb{T}^{d}}f(y)dy\right| \right\}=0.
\]

They also proved that the exponent $-1/2$ is best possible and that there exist continuous functions $f(x)$ such that, for almost every $(  \alpha,x)\in\mathbb{T}^{d}\times\mathbb{T}^{d}$, one has
\[
\limsup_{N\to+\infty}\left\{  N^{\frac{1}{2}}\left| \dfrac{1}{N}\sum_{n=1}^{N}f(x+n\alpha)  -\int_{\mathbb{T}^{d}}f(y)dy\right| \right\}  =+\infty.
\]

In conclusion, the rate of convergence of the means $N^{-1}\sum _{n=1}^{N}f(x+n\alpha)$ to the integral $\int _{\mathbb T^d}f(y)dy$ can be arbitrarily slow and it is also quite easy to see that this rate of convergence  cannot be faster than $cN^{-1}$; see the proof of Corollary \ref{C-1}. The goal in this paper is to show that, with suitable smoothness assumptions on the function $f(x)$, the speed of summability of the divergent sequence $\{f(x+n\alpha)\}_{n=1}^{+\infty}$ can be improved if instead of the arithmetic means one considers smoother means such as, for instance,
\[
\dfrac{1}{N}\sum_{n=1-N}^{N-1}\left(1-\dfrac{|n|}{N}\right)  f(x+n\alpha).
\]

See \cite{DT} and \cite{Z} for references about summation methods. Let us now fix some notations for what follows. Denote by 
$\Vert t\Vert $ the distance of a real number $t$ to the nearest integer, that is, $\|t\|=\inf_{n\in\mathbb Z}\left\{|t-n|\right\}$.
Functions on the torus $\mathbb{T}^{d}=\mathbb{R}^{d}/\mathbb{Z}^{d}$ are identified with periodic functions on $\mathbb{R}^{d}$ with period $\mathbb{Z}^{d}$. The Fourier transform and the Fourier expansion of an integrable function on the torus are defined respectively by
\begin{align*}
&\widehat{f}(m)  =\int_{\mathbb{T}^{d}}f(x)e^{-2\pi im\cdot x}  dx,\qquad   \qquad Sf(x)=\sum_{m\in\mathbb T^d}\widehat f(m) e^{2\pi i m\cdot x}.
\end{align*}

The Sobolev space $W^{\delta,2}(\mathbb{T}^{d})$, $\delta>0$, is the space of distributions on $\mathbb T^d$ defined by the norm
\[
\|f\|_{\delta, 2}= \left(\sum_{m\in\mathbb{Z}^{d}}
\left(1+\vert m\vert ^{2}\right)^{\delta}
| \widehat{f}(m) |^{2} \right)^{\frac{1}{2}}.
\]

In what follows $\Phi(N,n)$ denotes a complex valued function of the positive integer variable $N\geq 1$ and the integer variable $n\in\mathbb Z$, with the property that for every $N$ the function $n \rightarrow \Phi (N,n)$ has bounded support, and that
\begin{equation}
\label{condizione}
\sum _{n=-\infty}^{+\infty}\Phi(N,n)=1.
\end{equation}

The weighted discrepancy associated to the weights $\left\{\Phi(N,n)\right\}_{n =-\infty}^{+\infty}$ and to the Kronecker sequence $\{n\alpha\}_{n=-\infty}^{+\infty}$, with $\alpha\in\mathbb R^d$, or equivalently with $\alpha\in\mathbb{T}^d$, is defined by  
\[
\mathcal D^{\Phi,\alpha}_N f(x)=\sum_{n=-\infty}^{+\infty}\Phi(N,n)  f(x+n\alpha)  -\int_{\mathbb{T}^{d}}f(y)dy.
\]

An example to keep in mind is
\[\Phi(N,n) = \left(\sum _{n=-\infty}^{+\infty}\Psi(N^{-1}n)  \right)^{-1}\Psi(N^{-1}n)\]
where $\Psi(t)$ is a suitable bounded function with compact support. In this case $N$ is roughly the size
of the support of the function $n\to \Phi(N,n)$. The assumption of compact support
could be weakened assuming a suitably fast decay at infinity.

Our first main result is related to the results in \cite{Colzani, Colzani2} and it reads as follows.

\begin{Theorem}\label{T-1}
Let $\mathcal D^{\Phi, \alpha}_N$ be the operator defined as above, with $\Phi(N,n)$ satisfying \eqref{condizione}. Assume the following.
\begin{itemize}
\item[$(i)$] There exist constants $K>0$ and $\vartheta >0$ such that for every $N\geq 1$ and every $t\in\mathbb R$ one has
\[
\left\vert \sum _{n=-\infty}^{+\infty}\Phi(N,n)e^{2\pi int}\right\vert \leq K(1+N\Vert t\Vert)^{-\vartheta}.
\]
\item[$(ii)$] The function $f(x)$ is in the Sobolev class $W^{\delta,2}(\mathbb T^d)$, with $\delta>d/2$ if $0<\vartheta<1$, and $\delta>d\vartheta-d/2$ if $\vartheta\geq 1$.
\end{itemize}

Then, for almost every $\alpha$ there exists a positive constant $c(f,\alpha)$ such that for every positive integer $N$ one has
\[
\sup_{x\in\mathbb{T}^{d}}\left\{\left|\mathcal D^{\Phi, \alpha}_N f(x) \right|\right\}\leq c(f,\alpha)N^{-\vartheta}.
\]
\end{Theorem}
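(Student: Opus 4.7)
I would start by expanding $f$ in Fourier series. Since $\widehat\Phi(N,0)=\sum_n\Phi(N,n)=1$ by \eqref{condizione}, a direct computation gives
\[
\mathcal D^{\Phi,\alpha}_N f(x)=\sum_{m\ne 0}\widehat f(m)\,e^{2\pi i m\cdot x}\,\widehat\Phi(N,m\cdot\alpha),
\]
where $\widehat\Phi(N,t):=\sum_n\Phi(N,n)e^{2\pi int}$, so hypothesis (i) yields
\[
\sup_{x\in\mathbb T^d}\bigl|\mathcal D^{\Phi,\alpha}_N f(x)\bigr|\le K\sum_{m\ne 0}|\widehat f(m)|\,(1+N\|m\cdot\alpha\|)^{-\vartheta}.
\]

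I would split this sum at the threshold $\|m\cdot\alpha\|=1/N$, writing the right-hand side as $A_N(\alpha)+B_N(\alpha)$. For the ``small denominator'' piece $B_N$, the trivial bound $(1+N\|m\cdot\alpha\|)^{-\vartheta}\le 1$ gives $B_N(\alpha)\le V_N(\alpha)$, where $V_N(\alpha):=\sum_{m\ne 0,\ \|m\cdot\alpha\|<1/N}|\widehat f(m)|$ is monotone decreasing in $N$. Since $\delta>d/2$ implies $\sum|\widehat f(m)|<\infty$ by Cauchy--Schwarz, Fubini gives $\int_{\mathbb T^d}V_N\,d\alpha\le (2/N)\sum|\widehat f(m)|$. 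On the dyadic scale $N=2^k$, Markov's inequality yields $\Pr\{V_{2^k}>2^{-k\vartheta}\}\le C\,2^{k(\vartheta-1)}$, which is summable in $k$ when $\vartheta<1$. Borel--Cantelli then gives $V_{2^k}(\alpha)\le 2^{-k\vartheta}$ eventually a.e., and monotonicity in $N$ promotes the bound to every $N$. For the ``large denominator'' piece I use $(1+N\|m\cdot\alpha\|)^{-\vartheta}\le N^{-\vartheta}\|m\cdot\alpha\|^{-\vartheta}$ to obtain
\[
A_N(\alpha)\le N^{-\vartheta}\sum_{m\ne 0}|\widehat f(m)|\,\|m\cdot\alpha\|^{-\vartheta},
\]
and for $\vartheta<1$ the elementary integral $\int_0^1\|t\|^{-\vartheta}\,dt<\infty$ combined with Fubini shows that the series on the right is $\alpha$-integrable, hence a.e.\ finite. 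Together these settle the range $0<\vartheta<1$ with $\delta>d/2$.

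The hard part is the range $\vartheta\ge 1$, in which both subarguments collapse: the Borel--Cantelli series $\sum 2^{k(\vartheta-1)}$ diverges, and $\int\|t\|^{-\vartheta}\,dt=+\infty$. In this regime the stronger hypothesis $\delta>d\vartheta-d/2$ has to enter through a different mechanism. I would exploit the Dirichlet pigeonhole principle---which forces $|m|\gtrsim N^{1/d}$ whenever $\|m\cdot\alpha\|<1/N$---to replace $V_N(\alpha)$ by the Sobolev tail $\sum_{|m|\gtrsim N^{1/d}}|\widehat f(m)|$, whose rate of decay in $N$ is governed by $\delta$. For $A_N$, I would combine a Khintchine--Groshev type metric Diophantine estimate, valid for a.e.\ $\alpha$, with a dyadic decomposition in both $|m|$ and $\|m\cdot\alpha\|$, so that the smoothness of $f$ absorbs the growth of $\|m\cdot\alpha\|^{-\vartheta}$ on the low frequencies. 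The careful balancing of the Sobolev-tail exponent against the Diophantine growth exponent, against the smoothness index, is what produces the precise threshold $\delta>d\vartheta-d/2$; this balancing is the most technical part of the argument.
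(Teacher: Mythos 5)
Your treatment of the range $0<\vartheta<1$ is correct (indeed the splitting at $\|m\cdot\alpha\|=1/N$ and the Borel--Cantelli step are not even needed: the crude bound $(1+N\|m\cdot\alpha\|)^{-\vartheta}\le N^{-\vartheta}\|m\cdot\alpha\|^{-\vartheta}$ applied to every $m$, plus Fubini and the integrability of $\|t\|^{-\vartheta}$, already finishes that case, which is what the paper does). The genuine gap is the case $\vartheta\ge 1$, which you correctly identify as the hard part but then only sketch, and the sketch points in a direction that cannot produce the claimed threshold. A pointwise metric Diophantine bound of Khintchine--Groshev type, $\|m\cdot\alpha\|\gtrsim_\alpha |m|^{-d}\log^{-1-\varepsilon}(1+|m|)$ for a.e.\ $\alpha$, fed into the series term by term and closed with Cauchy--Schwarz, needs $\delta>d\vartheta+d/2$; a dyadic counting refinement (counting how many $m$ in a shell have $\|m\cdot\alpha\|$ of a given size, as in Lemma \ref{L-2} with $\sigma=d$) improves this only to $\delta>d\vartheta$, which is the threshold of Theorem \ref{T-2}, not of Theorem \ref{T-1}. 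In fact no argument whose exceptional set of $\alpha$'s is independent of $f$ can reach $\delta>d\vartheta-d/2$: by Dirichlet's theorem every $\alpha$ admits infinitely many $m$ with $\|m\cdot\alpha\|\le|m|^{-d}$, and Theorem \ref{T-6} (with $\sigma=d$) then produces, for every $\alpha$, a function in $W^{d\vartheta,2}$ for which the rate $N^{-\vartheta}$ fails. So the mechanism you are missing must be one in which the averaging over $\alpha$ interacts with the individual Fourier coefficients of $f$.

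The paper's device is short: for $0<p<1/\vartheta$ the functions $\alpha\mapsto\|m\cdot\alpha\|^{-\vartheta}$ lie in $L^{p}(\mathbb{T}^d)$ with norm independent of $m$ (Lemma \ref{L-1} reduces this to $\int_0^{1/2}t^{-\vartheta p}\,dt<\infty$). Since $\vartheta\ge1$ forces $p\le1$, the quasinorm inequality $|a+b|^p\le|a|^p+|b|^p$ gives
\[
\int_{\mathbb{T}^d}\Bigl(\sum_{m\neq0}|\widehat f(m)|\,\|m\cdot\alpha\|^{-\vartheta}\Bigr)^{p}d\alpha
\le C_p\sum_{m\neq0}|\widehat f(m)|^{p},
\]
and the right-hand side is finite, by H\"older against the Sobolev norm, whenever $\delta>d(1/p-1/2)$; letting $p\uparrow1/\vartheta$ yields exactly $\delta>d\vartheta-d/2$. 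This makes $c(f,\alpha)=K\sum_{m\neq0}|\widehat f(m)|\,\|m\cdot\alpha\|^{-\vartheta}$ finite for a.e.\ $\alpha$ (with the null set depending on $f$, as it must). Without this $L^p$, $p<1$, averaging step your plan does not close.
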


Observe that if the assumption $(i)$ holds true with an exponent $\vartheta_0$, then it also holds true for every $\vartheta_1 \leq \vartheta_0$. 
If the function $f(x)$ has a degree of smoothness $d/2< \delta \leq d\vartheta_0 -d/2$ with $\vartheta_0> 1$, then one cannot guarantee a speed of convergence $cN^{-\vartheta_0}$, but at least one can guarantee a speed $cN^{-\vartheta_1}$ for every $1<\vartheta_1< \vartheta_0$ with $\delta> d\vartheta_1 -d/2$. 

The above result is a metric one and it holds true for almost every $\alpha$. Our second main result is a deterministic one and it holds for a specific $\alpha$.
\begin{Theorem}\label{T-2}
Let $\mathcal D^{\Phi, \alpha}_N$ be the operator defined as above with $\Phi(N,n)$ satisfying \eqref{condizione}. Assume the following.
\begin{itemize}
\item[$(i)$] There exist constants $K>0$ and $\vartheta>0$ such that for every positive integer $N$ and every $t\in\mathbb R$ one has 
\[
\left\vert \sum _{n=-\infty}^{+\infty}\Phi(N,n)e^{2\pi int}\right\vert \leq K(1+N\Vert t\Vert)^{-\vartheta}.
\]
\item[$(ii)$] The vector $\alpha\in\mathbb R^d$ is irrational and there exist constants $H>0$ and $\sigma\geq d$ such that $\Vert \alpha\cdot m\Vert \geq H\vert m\vert^{-\sigma}$ for every $m\in\mathbb{Z}^{d}\setminus\{0\}$.
\item[$(iii)$] Finally assume that $\delta>d/2$ and set
\begin{align*}
&X(d,\delta,\vartheta,\sigma,N)  =\begin{cases}
N^{\frac{-\delta+\vartheta\sigma-d\vartheta+d/2}{\sigma-d}} & \text{if }\vartheta<1/2 \text{ and } \delta<\vartheta\sigma-d(\vartheta-1/2)  ,\\
\log^{\frac 12}(1+N)  & \text{if } \vartheta<1/2 \text{ and } \delta=\vartheta \sigma-d(\vartheta-1/2),\\
1 & \text{if } \vartheta<1/2\text{ and } \delta>\vartheta\sigma-d(\vartheta-1/2),\\
N^{\frac{\sigma/2-\delta}{\sigma-d}}\log^{\frac 12}(1+N)  &\text{if } \vartheta=1/2 \text{ and } \delta<\sigma/2,\\
\log(1+N)  &\text{if } \vartheta=1/2 \text{ and } \delta=\sigma/2,\\
1 & \text{if } \vartheta=1/2 \text{ and }  \delta>\sigma/2,\\
N^{\frac{\vartheta(\vartheta\sigma-\delta)}{\vartheta\sigma-d/2}} &\text{if } \vartheta>1/2 \text{ and } \delta<\vartheta\sigma,\\
\log^{\frac 12}(1+N)  & \text{if } \vartheta>1/2 \text{ and } \delta=\vartheta\sigma,\\
1 & \text{if } \vartheta>1/2 \text{ and } \delta>\vartheta\sigma.
\end{cases}
\end{align*}
\end{itemize}

Then there exists a positive constant $c=c(H,K,d,\delta,\vartheta,\sigma)$ such that for every function $f(x)$ in the Sobolev space $W^{\delta,2}(\mathbb{T}^{d})$ and every positive integer $N$ one has
\[
\sup_{x\in\mathbb{T}^{d}}\left\{\left|\mathcal D^{\Phi, \alpha}_N f(x)\right|\right\}\leq cN^{-\vartheta}X(d,\delta,\vartheta,\sigma,N)\|f\|_{\delta,2}.
\]
\end{Theorem}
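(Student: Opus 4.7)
The plan is to Fourier-expand the operator, reduce via Cauchy--Schwarz to a weighted lattice sum, and then control that sum by a dyadic decomposition that uses the Diophantine hypothesis in two distinct ways: pointwise (as a lower bound for $|m|$ in each shell) and via a spacing/pigeonhole argument (to bound the number of lattice points in each shell).

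First, using $\sum_{n}\Phi(N,n)=1$ and expanding $f$ in Fourier series,
\[
\mathcal D^{\Phi,\alpha}_N f(x)=\sum_{m\neq 0}\widehat f(m)\,\widehat\Phi(N,\alpha\cdot m)\,e^{2\pi i m\cdot x},\qquad \widehat\Phi(N,t):=\sum_{n}\Phi(N,n)e^{2\pi i n t}.
\]
Taking absolute values and applying Cauchy--Schwarz with the weight $(1+|m|^2)^{\delta}$ gives
\[
\sup_{x\in\mathbb T^{d}}\bigl|\mathcal D^{\Phi,\alpha}_N f(x)\bigr|\leq \|f\|_{\delta,2}\,S^{1/2},\qquad S:=\sum_{m\neq 0}(1+|m|^{2})^{-\delta}\bigl|\widehat\Phi(N,\alpha\cdot m)\bigr|^{2},
\]
so by hypothesis $(i)$ it suffices to show $S\lesssim N^{-2\vartheta}X(d,\delta,\vartheta,\sigma,N)^{2}$.

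Partition the nonzero frequencies into dyadic shells $A_{j}=\{m:2^{j}\leq N\|\alpha\cdot m\|<2^{j+1}\}$ for $0\leq j\leq\log_{2}N$, together with $A_{-1}=\{m\neq 0:N\|\alpha\cdot m\|<1\}$. By $(i)$, $|\widehat\Phi(N,\alpha\cdot m)|^{2}\lesssim 2^{-2j\vartheta}$ on $A_{j}$; by $(ii)$, every $m\in A_{j}$ satisfies $|m|\geq M_{j}\sim(N/2^{j})^{1/\sigma}$. Applying $(ii)$ to differences $m-m'$ with $|m|,|m'|\leq R$ further shows that the points $\{\alpha\cdot m\bmod 1:|m|\leq R\}$ are $H(2R)^{-\sigma}$-separated on the circle, so a simple pigeonhole yields the counting bound
\[
\#\bigl(A_{j}\cap\{|m|\leq R\}\bigr)\leq C\bigl(1+2^{j}R^{\sigma}/N\bigr),
\]
which improves the trivial $CR^{d}$ precisely on the dyadic range $M_{j}\leq R\leq R_{j}^{*}$, where $R_{j}^{*}\sim(N/2^{j})^{1/(\sigma-d)}$ when $\sigma>d$ (if $\sigma=d$ the Diophantine count is always sharper and $R_{j}^{*}$ effectively disappears).

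Decomposing $\sum_{m\in A_{j}}|m|^{-2\delta}$ into dyadic annuli in $R$ and using the Diophantine count on $[M_{j},R_{j}^{*}]$ and the trivial count on $(R_{j}^{*},\infty)$, each piece is a geometric series in $R$ whose dominant term depends on the sign of $\sigma-2\delta$. A short computation gives
\[
\sum_{m\in A_{j}}|m|^{-2\delta}\lesssim\begin{cases}(2^{j}/N)^{(2\delta-d)/(\sigma-d)} & \delta<\sigma/2,\\[2pt](2^{j}/N)^{2\delta/\sigma} & \delta>\sigma/2,\end{cases}
\]
with a logarithmic correction when $\delta=\sigma/2$. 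Feeding this back into $S$ produces another geometric series in $j$, whose exponent is $(2\delta-d)/(\sigma-d)-2\vartheta$ in the first regime and $2\delta/\sigma-2\vartheta$ in the second. The three possibilities for the sign of this exponent --- negative (sum dominated at $j=-1$), zero (logarithmic), positive (sum dominated at $j\sim\log_{2}N$, which returns $S\sim N^{-2\vartheta}$) --- produce the three types of behavior present in $X(d,\delta,\vartheta,\sigma,N)$. The critical values of $\delta$ separating them are $\vartheta\sigma-d(\vartheta-1/2)$ (which lies below $\sigma/2$ precisely when $\vartheta<1/2$) and $\vartheta\sigma$ (which lies above $\sigma/2$ precisely when $\vartheta>1/2$), with $\vartheta=1/2$ the borderline where both collapse to $\sigma/2$; this generates exactly the nine sub-cases in the definition of $X$. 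Taking a square root then yields the claimed estimate. The main technical burden is the combinatorial case analysis itself together with careful accounting of the logarithmic factor on each critical boundary; the degenerate case $\sigma=d$ requires only a minor separate argument.
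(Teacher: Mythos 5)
Your proposal is correct and would prove the theorem, but it runs along a genuinely different decomposition than the paper's. The paper also starts from the Fourier expansion and Cauchy--Schwarz, but then decomposes the lattice sum dyadically in the frequency magnitude $|m|$: on each annulus $2^{k}\leq|m|<2^{k+1}$ it invokes Lemma \ref{L-2}, which bounds $\sum_{0<|m|<R}\Vert\alpha\cdot m\Vert^{-2\vartheta}$ by packing the ($H(2R)^{-\sigma}$-separated) values $\Vert\alpha\cdot m\Vert$ into the leftmost admissible slots, and finally truncates at a radius $2^{M}$ optimized against the tail $\sum_{|m|\geq 2^{M}}|m|^{-2\delta}$. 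You instead decompose dyadically in the level sets $A_{j}$ of $N\Vert\alpha\cdot m\Vert$ and use the very same separation property (Diophantine hypothesis applied to $p\pm q$) as a lattice-point count $\#(A_{j}\cap\{|m|\leq R\})\lesssim 1+2^{j}R^{\sigma}/N$, combined with the pointwise lower bound $|m|\gtrsim(N/2^{j})^{1/\sigma}$; both of your counting inputs and the crossover radius $R_{j}^{*}$ check out. The two routes are dual to each other, and yours is in fact the sharper one: applying the worst case of Lemma \ref{L-2} independently on every annulus in $|m|$ lets the extremal configuration of small $\Vert\alpha\cdot m\Vert$ recur at every scale, which your level-set count forbids. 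Concretely, your method reproduces the thresholds $\delta=\vartheta\sigma-d(\vartheta-1/2)$, $\sigma/2$, $\vartheta\sigma$ exactly, but in the subcritical regimes it yields $X=N^{\vartheta-\delta/\sigma}$ when $\vartheta>1/2$ and $\sigma/2<\delta<\vartheta\sigma$ (versus the stated $N^{\vartheta(\vartheta\sigma-\delta)/(\vartheta\sigma-d/2)}$, which is larger since $\vartheta\sigma/(\vartheta\sigma-d/2)>1$), and it loses the $\log^{1/2}(1+N)$ when $\vartheta=1/2$ and $\delta<\sigma/2$; one can check (e.g.\ for $d=\sigma=1$, $\vartheta=1$, $\delta=3/4$ and $\alpha$ the golden ratio, where $\sum_{m}m^{-3/2}(1+N\Vert m\alpha\Vert)^{-2}\approx N^{-3/2}$) that your exponents are attained and genuinely better. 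So your assertion that the computation ``generates exactly the nine sub-cases in the definition of $X$'' is the one inaccuracy: it generates the same case structure but strictly stronger rates in two of the nine cells, which of course still implies the stated estimate --- just do not force the bookkeeping to match $X$ verbatim, and do carry out the $\sigma=d$ degeneration (where only the Diophantine count is ever used and the answer is $(2^{j}/N)^{2\delta/\sigma}$) and the absolute convergence of the Fourier expansion for $\delta>d/2$ explicitly.
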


Observe that both Theorem \ref{T-1} and Theorem \ref{T-2} guarantee a speed of convergence $cN^{-\vartheta}$, up to some possible logarithmic transgressions, but the smoothness assumptions on the functions in these theorems are different. The index of smoothness $\delta>d\vartheta -d/2$ in Theorem \ref{T-1} is allowed to be smaller than the index $\delta>\vartheta\sigma\geq d\vartheta$ in Theorem \ref{T-2}. On the other hand the conclusion in Theorem \ref{T-1} holds for almost every $\alpha$, with $\alpha$ depending on the given function one is considering, whereas in Theorem \ref{T-2} the vector $\alpha$ is independent of the function.
Anyhow, both theorems are essentially sharp.
The following theorem shows that in Theorem \ref{T-1} and in Theorem \ref{T-2} the speed of convergence $cN^{-\vartheta}$ cannot be accelerated for every nonconstant function, provided that the assumption $\left\vert \sum _{n=-\infty}^{+\infty}\Phi(N,n)e^{2\pi int}\right\vert \leq K(1+N\Vert t\Vert)^{-\vartheta}$ can be reversed. 

\begin{Theorem}\label{T-3}
Set
\[
C(t)=\limsup_{N\to+\infty} \left\{N^{\vartheta}\left| \sum _{n=-\infty}^{+\infty}\Phi(N,n)e^{2\pi int}
\right|\right\}.
\]

Then, for every function $f(x)$, every $m\in\mathbb{Z}^d\setminus \{ 0\}$ and every $\alpha \in \mathbb{T}^d$ one has  
\begin{align*}
\limsup_{N\to+\infty}\left\{N^{\vartheta}\sup_{x\in \mathbb{T}^d}\left\{\left|\mathcal D^{\Phi, \alpha}_N f(x) \right|\right\}\right\}
&\geq C(m\cdot\alpha)|\widehat{f}(m)|.
\end{align*}
\end{Theorem}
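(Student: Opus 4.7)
The plan is to read off the Fourier coefficients of $\mathcal D^{\Phi,\alpha}_N f$ on $\mathbb T^d$ and then use the fact that on a unit-measure torus the modulus of any Fourier coefficient is bounded by the $L^\infty$ norm of the function, so by its supremum.

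First I would compute, formally for $f\in L^1(\mathbb T^d)$ (and legitimately when $f$ is smooth enough for the interchange of summations, then extended by density), the Fourier expansion of the weighted average. Inserting $f(x+n\alpha)=\sum_m\widehat f(m)e^{2\pi i m\cdot(x+n\alpha)}$ and using that $\Phi(N,\cdot)$ has finite support gives
\[
\sum_n\Phi(N,n)f(x+n\alpha)=\sum_{m\in\mathbb Z^d}\widehat f(m)\,\widehat\Phi(N,m\cdot\alpha)\,e^{2\pi i m\cdot x},
\]
where I write $\widehat\Phi(N,t)=\sum_n\Phi(N,n)e^{2\pi i n t}$. The $m=0$ term equals $\widehat f(0)=\int_{\mathbb T^d}f$ because of the normalization \eqref{condizione}, so
\[
\mathcal D^{\Phi,\alpha}_N f(x)=\sum_{m\neq 0}\widehat f(m)\,\widehat\Phi(N,m\cdot\alpha)\,e^{2\pi i m\cdot x}.
\]
In particular, for each fixed $m\neq 0$ the $m$-th Fourier coefficient of $\mathcal D^{\Phi,\alpha}_N f$ equals $\widehat f(m)\,\widehat\Phi(N,m\cdot\alpha)$.

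Second, for any integrable function $g$ on $\mathbb T^d$ one has the trivial bound $|\widehat g(m)|\leq \|g\|_{L^1(\mathbb T^d)}\leq \|g\|_{L^\infty(\mathbb T^d)}=\sup_{x\in\mathbb T^d}|g(x)|$ because the torus has unit measure. Applying this to $g=\mathcal D^{\Phi,\alpha}_N f$ yields
\[
|\widehat f(m)|\,|\widehat\Phi(N,m\cdot\alpha)|\leq \sup_{x\in\mathbb T^d}\bigl|\mathcal D^{\Phi,\alpha}_N f(x)\bigr|.
\]

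Third, multiply by $N^\vartheta$ and take the $\limsup$ in $N$. Since $|\widehat f(m)|$ is a constant independent of $N$, it factors out of the $\limsup$ on the left, giving
\[
|\widehat f(m)|\,C(m\cdot\alpha)=|\widehat f(m)|\limsup_{N\to+\infty}N^\vartheta|\widehat\Phi(N,m\cdot\alpha)|\leq \limsup_{N\to+\infty}\Bigl\{N^\vartheta\sup_{x\in\mathbb T^d}\bigl|\mathcal D^{\Phi,\alpha}_N f(x)\bigr|\Bigr\},
\]
which is the desired inequality.

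There is essentially no hard step here: the whole content is the observation that $\widehat\Phi(N,m\cdot\alpha)$ is exactly the eigenvalue of the averaging operator on the character $e^{2\pi i m\cdot x}$, together with the unit-measure $L^\infty\to\ell^\infty$ bound on Fourier coefficients. The only point requiring a line of care is justifying the term-by-term Fourier expansion; this is immediate because the sum over $n$ is finite for each $N$, so one may swap the two summations without any convergence issue, and the identification of the Fourier coefficient of $\mathcal D^{\Phi,\alpha}_N f$ is rigorous for every $f\in L^1(\mathbb T^d)$.
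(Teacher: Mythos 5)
Your proof is correct and follows essentially the same route as the paper: both identify the $m$-th Fourier coefficient of $\mathcal D^{\Phi,\alpha}_N f$ as $\widehat f(m)\sum_n\Phi(N,n)e^{2\pi i n m\cdot\alpha}$, bound it by the $L^1$ norm and hence by the supremum, and then take the $\limsup$ after multiplying by $N^{\vartheta}$. No issues.
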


In particular, if $C(t)>0$ for a set of $t\in \mathbb{T}$ of measure $0<\eta\leq1$, then $C(m\cdot \alpha)>0$ for a set of $\alpha\in \mathbb{T}^d$ of measure $\eta$; see Lemma \ref{L-1}. 

Notice that in this theorem the smoothness index of the function plays no role. Nonetheless, some smoothness is necessary. 
Indeed, since the Sobolev space $W^{\frac{d}{2},2}(\mathbb{T}^d)$ contains unbounded functions, it easily follows that the smoothness assumption $\delta >d/2$ in Theorem \ref{T-1} and Theorem \ref{T-2} is necessary.
\begin{Theorem}\label{T-4}                 
There exists a function $f(x)$ in the Sobolev space $W^{\frac{d}{2},2}(\mathbb T^d)$ such that for every irrational vector $\alpha$ and every $N$ one has
\[
\sup_{x\in\mathbb{T}^{d}}\left\{\left|\mathcal D^{\Phi, \alpha}_N f(x) \right|\right\} =+ \infty.
\]

Moreover, if the sequence $\left\{\Phi(N,n)\right\}_{n=-\infty}^{+\infty}$ is non-negative, then the above discrepancy is infinite for every $\alpha$.
\end{Theorem}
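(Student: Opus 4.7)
The plan is to exhibit a single non-negative $f \in W^{d/2,2}(\mathbb T^d)$ with a logarithmic singularity at the origin, smooth elsewhere, and then force the discrepancy to blow up via this singularity. For fixed $\beta\in(0,1/2)$ and a smooth cutoff $\eta$ supported in $\{|x|\leq 1/4\}$ with $\eta\equiv 1$ near $0$, I set $f(x)=\eta(x)(-\log|x|)^{\beta}$. Then $f\geq 0$, $f\in C^{\infty}(\mathbb T^d\setminus\{0\})$, $f(x)\to+\infty$ as $x\to 0$, and a standard dyadic Littlewood-Paley computation places $f$ in $W^{d/2,2}(\mathbb T^d)$ precisely because $\beta<1/2$, which sits right at the critical borderline where $W^{d/2,2}\not\hookrightarrow L^{\infty}$.

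\textbf{First claim (irrational $\alpha$).} I fix an irrational $\alpha$ and $N\ge 1$. Since $\sum_n\Phi(N,n)=1$, some $n_0$ has $\Phi(N,n_0)\ne 0$; let $S$ be the finite support of $n\mapsto\Phi(N,n)$. Because $\alpha$ is irrational, the finitely many vectors $(n-n_0)\alpha$ for $n\in S\setminus\{n_0\}$ are all non-zero modulo $\mathbb Z^d$, hence bounded away from $0$ in $\mathbb T^d$. For $x=-n_0\alpha+y$ with $|y|$ small I split
\[
\mathcal D^{\Phi,\alpha}_N f(x) \;=\; \Phi(N,n_0)\,f(y) \;+\sum_{n\in S\setminus\{n_0\}}\Phi(N,n)\,f\!\bigl(y+(n-n_0)\alpha\bigr) \;-\int_{\mathbb T^d}\! f.
\]
As $y\to 0$, the middle sum stays bounded (each term tends to $f((n-n_0)\alpha)<\infty$ by continuity of $f$ at non-singular points), while the first term diverges to $\pm\infty$. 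Thus $|\mathcal D^{\Phi,\alpha}_N f(x)|\to+\infty$, so its supremum in $x$ is infinite.

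\textbf{Second claim (non-negative $\Phi$).} If $\Phi(N,n)\ge 0$ for every $n$, I pick any $n_0$ in the support of $\Phi(N,\cdot)$. Because $f\ge 0$ and all $\Phi(N,n)\ge 0$, dropping every other summand gives
\[
\mathcal D^{\Phi,\alpha}_N f(x) \;\ge\; \Phi(N,n_0)\,f(x+n_0\alpha) \;-\int_{\mathbb T^d}\! f,
\]
whose right-hand side tends to $+\infty$ whenever $x+n_0\alpha\to 0$, with no Diophantine condition on $\alpha$.

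The technical core is confirming that $f\in W^{d/2,2}$: the log-exponent $\beta<1/2$ is the sharp cutoff that barely keeps the function in the borderline Sobolev space while still allowing $f(x)\to+\infty$ at the origin. Once such an $f$ is at hand the rest is a soft local argument: irrationality of $\alpha$ (or non-negativity of $\Phi$) ensures that only one translated copy $f(\cdot+n_0\alpha)$ ever feels the singularity, so no cancellation from the other translates can tame it.
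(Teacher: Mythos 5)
Your proposal is correct and follows essentially the same route as the paper: exhibit a function in $W^{\frac{d}{2},2}(\mathbb{T}^d)$ that is unbounded only at a single point, and observe that for irrational $\alpha$ (or for non-negative weights) the singular term in $\sum_n \Phi(N,n)f(x+n\alpha)$ cannot be cancelled by the finitely many other, locally bounded, translates. The only difference is cosmetic: you build the unbounded function on the physical side as $\eta(x)(-\log|x|)^{\beta}$ with $\beta<1/2$, whereas the paper uses the Fourier-side example $\sum_{m}(1+|m|^2)^{-\frac d2}\log^{-1}(2+|m|)e^{2\pi i m\cdot x}$.
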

\begin{comment}
\vecchio{Observe also that the ranges of the Sobolev index of smoothness in Theorem \ref{T-1} and in Theorem \ref{T-2} are different. However,  both indexes are close to be the best possible. First observe that the assumption $\delta>d/2$ is necessary. Indeed, for $\delta\leq d/2$ the Sobolev class $W^{\delta,2}(\mathbb{T}^d)$ contains unbounded functions, and for unbounded functions one has
\[\sup_{x\in\mathbb{T}^{d}}\left\{|\mathcal  D^{\Phi, \alpha}_N f(x)|\right\}=+\infty. \] }
\end{comment}

The following theorem shows that the index $\delta > d\vartheta -d/2$ in Theorem \ref{T-1} is sharp, provided that the assumption $(i)$ in the theorem can be reversed.

\begin{Theorem}\label{T-5}
Assume that for an infinite sequence of $N$'s there exists $H>0$ such that
% Assume that the functions $\sum _{n=-\infty}^{+\infty}\Phi(N,n)e^{2\pi int}$ are  positive for an infinite sequence of $N$'s, and that for these $N$'s there exists $H>0$ such that
\[
 \sum _{n=-\infty}^{+\infty}\Phi(N,n)e^{2\pi int} \geq H(1+N\Vert t\Vert)^{-\vartheta}.
\]
\begin{itemize} 
\item[$(i)$] If $\delta<d\vartheta-d/2$ then there exists a function $f(x)$ in the Sobolev space $W^{\delta,2}(\mathbb T^d)$ such that, for every $\alpha$, one has
\[
\limsup_{N\to+\infty} \left\{N^{\vartheta}
\sup_{x\in\mathbb{T}^{d}}\left\{\left|\mathcal D^{\Phi, \alpha}_N f(x) \right|\right\} \right\}=+ \infty.
\]
\item[$(ii)$]  There exists a function $f(x)$ in the Sobolev space $W^{d\vartheta -\frac{d}{2},2}(\mathbb T^d)$ such that, for almost every $\alpha$, one has
\[
\limsup_{N\to+\infty} \left\{N^{\vartheta}
\sup_{x\in\mathbb{T}^{d}}\left\{\left|\mathcal D^{\Phi, \alpha}_N f(x) \right|\right\} \right\}=+ \infty.
\]
\end{itemize}

\begin{comment}\vecchio{Then there exists a function $f(x)$ in the Sobolev space $W^{\delta,2}(\mathbb T^d)$ with $\delta<d\vartheta-d/2$ such that for every $\alpha$ one has
\[
\limsup_{N\to+\infty} \left\{N^{\vartheta}
\sup_{x\in\mathbb{T}^{d}}\left\{\left|\mathcal D^{\Phi, \alpha}_N f(x) \right|\right\} \right\}=+ \infty.
\]}\end{comment}
\end{Theorem}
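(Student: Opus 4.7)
The common reduction begins with the Fourier identity
\[
\mathcal{D}_N^{\Phi,\alpha} f(x) = \sum_{m\in\mathbb{Z}^d\setminus\{0\}} \widehat{f}(m)\,\widehat{\Phi}_N(m\cdot\alpha)\,e^{2\pi im\cdot x},
\]
where $\widehat{\Phi}_N(t):=\sum_{n}\Phi(N,n)e^{2\pi int}$. Since the torus has unit measure, $\|\cdot\|_{L^\infty}\geq\|\cdot\|_{L^2}$, and Parseval combined with the hypothesized reverse bound $|\widehat{\Phi}_N(t)|\geq H(1+N\Vert t\Vert)^{-\vartheta}$ (valid along an infinite subsequence $\{N_j\}$) yields
\[
\sup_{x\in\mathbb{T}^d}|\mathcal{D}_{N_j}^{\Phi,\alpha}f(x)|^2 \;\geq\; H^2\sum_{m\neq 0}|\widehat{f}(m)|^2(1+N_j\Vert m\cdot\alpha\Vert)^{-2\vartheta}. \qquad (\star)
\]
In both parts I would take $f$ with radial Fourier coefficients $\widehat f(m)=(1+|m|^2)^{-s/2}$ and combine $(\star)$ with Diophantine information on $\alpha$ to force the right-hand side to be large.

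For part $(i)$ I would pick any $s$ with $\delta+d/2<s<d\vartheta$, a non-empty range since $\delta<d\vartheta-d/2$; then $\|f\|_{\delta,2}^{2}=\sum(1+|m|^2)^{\delta-s}<\infty$ so $f\in W^{\delta,2}$. For every $\alpha\in\mathbb{R}^d$ and every $N\geq 1$, Dirichlet's theorem (linear-form version) produces a nonzero $m_0=m_0(N,\alpha)\in\mathbb{Z}^d$ with $|m_0|_\infty\leq N^{1/d}$ and $\Vert m_0\cdot\alpha\Vert\leq N^{-1}$. Retaining only the $m_0$-summand in $(\star)$ gives $\sup_x|\mathcal{D}_{N_j}f(x)|^2\gtrsim N_j^{-2s/d}$, so $N_j^{\vartheta}\sup_x|\mathcal{D}_{N_j}f|\gtrsim N_j^{\vartheta-s/d}\to+\infty$ along the subsequence. (For rational $\alpha$ the conclusion is immediate, since $\mathcal{D}_N^{\Phi,\alpha} f$ does not tend to zero when $f$ is non-constant.)

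For part $(ii)$, at the borderline $\delta=d\vartheta-d/2$, the one-term bound is too weak and many well-approximated $m$'s must be combined. If $\vartheta\leq 1$ then $\delta\leq d/2$ and Theorem \ref{T-4} already supplies an $f\in W^{d/2,2}\subseteq W^{\delta,2}$ whose sup is infinite at every irrational $\alpha$, hence at a.e.\ $\alpha$. For $\vartheta>1$, take the same kind of $f$ but with $s=d\vartheta+\varepsilon$, so $f\in W^{\delta,2}$. The metric input is that for almost every $\alpha\in\mathbb{T}^d$ the counting function
\[
K_N(\alpha):=\#\{m\in\mathbb{Z}^d\setminus\{0\}:\;|m|\leq N,\;N\Vert m\cdot\alpha\Vert\leq 1\}
\]
satisfies $K_N(\alpha)\sim c_d N^{d-1}$ with these good $m$'s asymptotically equidistributed in the ball of radius $N$. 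A dyadic decomposition over the good set then yields $\sum_{m\text{ good}}(1+|m|^2)^{-s}\gtrsim N^{-1}$ a.e., and substituting into $(\star)$ one concludes $N_j^{2\vartheta}\sup_x|\mathcal{D}_{N_j}f|^2\gtrsim N_j^{2\vartheta-1}\to+\infty$ for a.e.\ $\alpha$.

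The main technical obstacle is the metric step in part $(ii)$. A direct Paley-Zygmund estimate on $\|\mathcal{D}_N f\|_{L^2}^2$ itself would give a variance-to-mean-squared ratio of order $N$, which is insufficient; one must instead concentrate the simpler counting statistic $K_N(\alpha)$, whose second moment is dominated by the diagonal contribution $\mathrm{Var}(K_N)\asymp N^{d-1}$ because $(m_1\cdot\alpha,m_2\cdot\alpha)\bmod 1$ is jointly uniform on $\mathbb{T}^2$ for $\mathbb{Z}$-linearly independent $m_1,m_2$. Chebyshev along a geometric subsequence of $\{N_j\}$ (absorbing a logarithmic correction at the endpoint $\vartheta=1/2$) then promotes the estimate to the almost-everywhere statement.
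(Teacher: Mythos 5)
Your part $(i)$ is correct and, despite the $L^{2}$ packaging, rests on the same engine as the paper's proof: Dirichlet's theorem. The paper takes $\widehat f(m)=|m|^{-d\vartheta}$, evaluates at $x=0$ where all terms are positive (the hypothesis is a lower bound on the kernel itself, not on its modulus, so the kernel is positive), and invokes the divergence of $\sum_{m\neq 0}|m|^{-d\vartheta}\Vert m\cdot\alpha\Vert^{-\vartheta}$, which follows from Dirichlet exactly as in your single-term extraction. Your version, keeping one Dirichlet term in Parseval's identity, is a legitimate variant and handles rational $\alpha$ for free (a term with $\Vert m_0\cdot\alpha\Vert=0$ only helps).

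Part $(ii)$ is where the proposal breaks down. The entire weight of the argument is carried by the claim that for a.e.\ $\alpha$ the set $\{m:|m|\leq N,\ N\Vert m\cdot\alpha\Vert\leq 1\}$ has cardinality $\sim c_dN^{d-1}$ \emph{and} is asymptotically equidistributed over dyadic shells of the ball $|m|\le N$; without the equidistribution, the good $m$'s could concentrate near $|m|\approx N$, in which case $\sum_{m\,\mathrm{good}}(1+|m|^2)^{-s}\approx N^{d-1-2s}$ and $N^{2\vartheta}$ times this tends to $0$. The second-moment justification you sketch does not work as stated: $(m_1\cdot\alpha,m_2\cdot\alpha)$ is jointly uniform on $\mathbb{T}^2$ only when $m_1,m_2$ are non-proportional, so the off-diagonal terms coming from proportional pairs must be controlled separately, and in dimension $d=1$ the joint uniformity fails for \emph{every} pair, so the variance bound $\mathrm{Var}(K_N)\asymp N^{d-1}$ has no proof there; moreover for $d=1$ the Chebyshev exceptional sets have measure $\asymp N^{-(d-1)}=1$ and Borel--Cantelli gives nothing. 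The paper avoids all of this. It reaches the endpoint space $W^{d\vartheta-d/2,2}$ by inserting a logarithmic correction, $\widehat f(m)=|m|^{-d\vartheta}\log^{-\vartheta}(1+|m|)$ (in $W^{d\vartheta-d/2,2}$ precisely when $\vartheta>1/2$), and then applying the Khintchine--Groshev theorem: for a.e.\ $\alpha$ there are infinitely many $m$ with $\Vert m\cdot\alpha\Vert\leq|m|^{-d}\log^{-1}(1+|m|)$, so the positive series $\sum_m|m|^{-d\vartheta}\log^{-\vartheta}(1+|m|)\Vert m\cdot\alpha\Vert^{-\vartheta}$ has infinitely many terms $\geq 1$ and diverges, whence $N^{\vartheta}\mathcal D^{\Phi,\alpha}_Nf(0)\gtrsim\sum_{\Vert m\cdot\alpha\Vert>1/N}(\cdots)\to+\infty$. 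If you want to salvage your write-up, replace the counting-function argument by this divergence-of-a-positive-series argument; your radial choice $\widehat f(m)=(1+|m|^2)^{-(d\vartheta+\varepsilon)/2}$ cannot work even with Khintchine--Groshev, since the extracted terms then decay like $|m|^{-\varepsilon}\log^{\vartheta}(1+|m|)$, so the logarithmic tuning of the coefficients is genuinely needed at the endpoint.
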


The following theorem shows that the smoothness index $\delta > \vartheta \sigma$ in Theorem \ref{T-2} is sharp, provided that the assumptions $(i)$ and $(ii)$ in the theorem can be
reversed.
 
\begin{Theorem}\label{T-6} 
Assume that for an infinite sequence of $N$'s there exists $H>0$ such that
%Assume that the functions $\sum _{n=-\infty}^{+\infty}\Phi(N,n)e^{2\pi int}$ are positive for an infinite sequence of $N$'s, and that for these $N$'s there exists $H>0$ such that
\[
 \sum _{n=-\infty}^{+\infty}\Phi(N,n)e^{2\pi int} \geq H(1+N\Vert t\Vert)^{-\vartheta}.
\]

Assume also that for a given $\alpha$ there exist $L>0$ and an infinite subset $\Omega$ of $ \mathbb Z^d\backslash\{0\}$ with 
$\Vert \alpha\cdot m\Vert \leq L\vert m\vert^{-\sigma}$ 
for every $m \in \Omega$.
Then there exists a function $f(x)\in W^{\vartheta \sigma,2}(\mathbb{T}^d)$ such that
\[
\limsup_{N\to+\infty} \left\{N^{\vartheta}\sup_{x\in \mathbb{T}^d} \left\{ \left| \mathcal D^{\Phi,\alpha}_N f(x) \right|  \right\}\right\} =+\infty.
 \]
 \begin{comment}
 \vecchio{Assume that condition $(ii)$ in Theorem \ref{T-2} can be reversed  for infinitely many values of $m$'s, that is $\Vert \alpha\cdot m\Vert \leq L\vert m\vert^{-\sigma}$ for infinitely many $m$'s. 
Also assume that condition $(i)$ in Theorem \ref{T-2} can be reversed if $t=m\cdot\alpha$ with the same $m$'s, that is 
$\left\vert \sum _{n=-\infty}^{+\infty}\Phi(N,n)e^{2\pi int}\right\vert \geq H(1+N\Vert t\Vert)^{-\vartheta}$. 
Then there exists a function $f(x)$ in the Sobolev space $W^{\delta,2}(\mathbb{T}^d)$ with $\delta<\vartheta\sigma$ such that
\[
\limsup_{N\to+\infty} \left\{N^{\vartheta}\sup_{x\in \mathbb{T}^d} \left\{ \left| \mathcal D^{\Phi,\alpha}_N f(x) \right|  \right\}\right\} =+\infty.
 \]}\end{comment}
\end{Theorem}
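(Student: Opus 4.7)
My plan is to recast the conclusion as an unbounded-operator-norm statement along the distinguished subsequence of $N$'s, and then to extract the function $f$ via the Banach--Steinhaus theorem. Throughout I assume $\vartheta\sigma>d/2$, because in the opposite regime the inclusion $W^{d/2,2}(\mathbb T^d)\subset W^{\vartheta\sigma,2}(\mathbb T^d)$ together with Theorem~\ref{T-4} already produces an $f$ that makes $\sup_{x}|\mathcal D^{\Phi,\alpha}_N f(x)|$ identically infinite and hence yields the statement for free.

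The first step is to expand in Fourier series. Using $\sum_n \Phi(N,n)=1$,
\[
\mathcal D^{\Phi,\alpha}_N f(x) = \sum_{m\in\mathbb Z^d\setminus\{0\}} \widehat{f}(m)\,\widehat{\Phi}(N,m\cdot\alpha)\,e^{2\pi i m\cdot x},
\qquad \widehat{\Phi}(N,t) := \sum_{n\in\mathbb Z}\Phi(N,n)\,e^{2\pi i nt}.
\]
Each linear functional $f\mapsto \mathcal D^{\Phi,\alpha}_N f(x_0)$ is continuous on $W^{\vartheta\sigma,2}(\mathbb T^d)$ by Cauchy--Schwarz in the Sobolev inner product, and the Riesz representer of this functional has norm independent of $x_0$; taking the supremum over $x_0$ and using this to interchange suprema identifies
\[
\bigl\|\mathcal D^{\Phi,\alpha}_N\bigr\|^{\,2}_{W^{\vartheta\sigma,2}\to C(\mathbb T^d)} = \sum_{m\in\mathbb Z^d\setminus\{0\}} \bigl(1+|m|^2\bigr)^{-\vartheta\sigma}\, \bigl|\widehat{\Phi}(N,m\cdot\alpha)\bigr|^{2}.
\]

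For $N$ in the distinguished subsequence and $m\in\Omega$, the hypotheses combine to give $|\widehat{\Phi}(N,m\cdot\alpha)|\ge H(1+NL|m|^{-\sigma})^{-\vartheta}$. Restricting the sum to those $m\in\Omega$ with $|m|^{\sigma}\le NL$ (so that $NL|m|^{-\sigma}\ge 1$), the $\Phi$--factor is at least a constant multiple of $(NL|m|^{-\sigma})^{-\vartheta}$; multiplying by the companion bound $(1+|m|^2)^{-\vartheta\sigma}\ge 2^{-\vartheta\sigma}|m|^{-2\vartheta\sigma}$, each surviving term is bounded below by a constant multiple of $N^{-2\vartheta}$, uniformly in the admissible $m$. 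Summing yields
\[
N^{2\vartheta}\,\bigl\|\mathcal D^{\Phi,\alpha}_N\bigr\|^{\,2}_{W^{\vartheta\sigma,2}\to C(\mathbb T^d)} \;\gtrsim\; \bigl|\,\Omega\cap\{m\in\mathbb Z^d:|m|\le(NL)^{1/\sigma}\}\,\bigr|,
\]
and because $\Omega$ is infinite the right-hand side tends to $+\infty$ along the distinguished subsequence.

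With the rescaled operator norms $N^{\vartheta}\|\mathcal D^{\Phi,\alpha}_N\|_{W^{\vartheta\sigma,2}\to C}$ unbounded along this subsequence, the Banach--Steinhaus theorem produces a function $f\in W^{\vartheta\sigma,2}(\mathbb T^d)$ (in fact a residual $G_\delta$ set of such $f$'s) with $\sup_{N}\{N^{\vartheta}\sup_{x}|\mathcal D^{\Phi,\alpha}_N f(x)|\}=+\infty$, from which the desired $\limsup$ statement follows by passing to a subsequence. The step I expect to be the most delicate is the exact computation of the operator norm: one must carefully match the Riesz optimizer with the right choice of $x_0$ so that the suprema over $x_0$ and over unit vectors in $W^{\vartheta\sigma,2}$ can legitimately be interchanged, and handle the case distinction at $\vartheta\sigma=d/2$. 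Everything else is essentially bookkeeping once this operator-theoretic picture is in place.
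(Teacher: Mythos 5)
Your argument is correct and follows essentially the same route as the paper: both combine the reversed kernel bound with $\Vert \alpha\cdot m\Vert\leq L|m|^{-\sigma}$ on the frequencies $m\in\Omega$ with $|m|\leq (NL)^{1/\sigma}$ to show each contributes $\gtrsim N^{-\vartheta}$ to the operator norm of $\mathcal D^{\Phi,\alpha}_N\colon W^{\vartheta\sigma,2}\to L^\infty$, and then invoke Banach--Steinhaus. The only cosmetic difference is that you compute the operator norm exactly by duality, whereas the paper lower-bounds it by evaluating at $x=0$ the explicit test functions $\sum_{m\in A}(1+|m|^2)^{-\vartheta\sigma/2}e^{2\pi i m\cdot x}$ over finite $A\subset\Omega$ (which are precisely truncations of your Riesz optimizer).
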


A crucial assumptions in Theorem \ref{T-2} is the behavior of the  sequence $\{\Vert \alpha\cdot m\Vert\}_{m\in\mathbb Z^d}  $. We recall some results in Diophantine approximation.
\begin{itemize}
\item[$(\star)$]  A classical result of Dirichlet states that for every vector $\alpha\in\mathbb{R}^d$ and every positive integer $M$ there exists $m=(m_{1},m_{2},\dots,m_{d})$ in $\mathbb{Z}^{d}$ with $\vert m_{j}\vert \leq M$ for every $j=1,\ldots,d$, and with $\Vert \alpha\cdot m\Vert \leq M^{-d}$. In particular, if there exists $H>0$ such that $\Vert \alpha\cdot m\Vert \geq H\vert m\vert ^{-\sigma}$ for every $m\in \mathbb{Z}^{d}\setminus\{  0\}$, then $\sigma\geq d $. See e.g. \cite[Chapter II, Theorem 1E]{Schmidt}.
\item[$(\star \star)$] If $\{1,\alpha_{1},\alpha_{2},\ldots,\alpha_{d}\}$ is a basis of a real algebraic number field of degree $d+1$, and if $\alpha=(\alpha_{1},\alpha_{2},\ldots,\alpha_{d})$, then there exists $H>0$ such that $\Vert \alpha\cdot m\Vert \geq H\vert m\vert^{-d}$ for every $m\in\mathbb{Z}^{d}\setminus\{  0\}$. See e.g. \cite[Chapter 2, Theorem 4A]{Schmidt}.
\item[$(\star \star \star)$] For every $\sigma>d$ the set of vectors $\alpha$ in $\mathbb{R}^{d}$ with $\Vert \alpha\cdot m\Vert <\vert m\vert^{-\sigma}$ for infinitely many $m\in\mathbb{Z}^{d}\setminus\{  0\}  $ has measure zero. This result is due to Khintchine. See e.g. \cite[Chapter 3, Theorem 3A]{Schmidt}, \cite{BeresnevichVelani} and the references therein. 
\end{itemize}

It follows from these results that the assumption $(ii)$ in Theorem \ref{T-2} is not empty  and $\sigma\geq d$ is necessary. The following corollaries show that also the assumption $(i)$ in Theorem \ref{T-1} and Theorem \ref{T-2} is nonempty. 

\begin{Cor}\label{C-1}
Let
 $ \Phi(N,n) = \left(\sum _{n=-\infty}^{+\infty}\Psi(N^{-1}n)  \right)^{-1}\Psi(N^{-1}n)$, with
\[
\Psi(t)=\begin{cases}
1 & \text{if } \vert t\vert \leq 1,\\
0 & \text{if } \vert t\vert >1.
\end{cases}
\]

Then Theorem \ref{T-1}, Theorem \ref{T-2} and Theorem \ref{T-3} apply with every $\vartheta \leq 1$.
\end{Cor}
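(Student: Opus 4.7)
The plan is to verify directly that the hypothesis $(i)$ common to Theorems \ref{T-1}, \ref{T-2}, and \ref{T-3} holds for this explicit $\Phi$ with exponent $\vartheta=1$, since all the other hypotheses in those theorems concern only the function $f$ or the vector $\alpha$. Once $(i)$ is proved for $\vartheta=1$, it automatically holds for every $\vartheta\leq 1$ (as noted right after the statement of Theorem \ref{T-1}), which gives the claim.

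First I would simplify $\Phi$. Since $\Psi(N^{-1}n)=1$ exactly when $|n|\leq N$, for each positive integer $N$ we have $\sum_{n}\Psi(N^{-1}n)=2N+1$, so that
\[
\Phi(N,n)=\frac{1}{2N+1}\,\mathbf{1}_{\{|n|\leq N\}}(n).
\]
Consequently the exponential sum in hypothesis $(i)$ is the normalized Dirichlet kernel,
\[
\sum_{n=-\infty}^{+\infty}\Phi(N,n)e^{2\pi int}=\frac{1}{2N+1}\sum_{n=-N}^{N}e^{2\pi int}=\frac{\sin((2N+1)\pi t)}{(2N+1)\sin(\pi t)}.
\]
Since this expression depends only on $t$ modulo $1$ and is even, I may assume $t=\|t\|\in[0,1/2]$.

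Next I would split the estimate into two ranges. When $N\|t\|\leq 1$, the trivial bound $|\sum_n \Phi(N,n)e^{2\pi int}|\leq\sum_n\Phi(N,n)=1$ suffices, because $(1+N\|t\|)^{-\vartheta}\geq 2^{-\vartheta}$ in this range, so the desired inequality holds with $K=2^{\vartheta}$. When $N\|t\|>1$, I would use the elementary inequality $|\sin(\pi t)|\geq 2\|t\|$ to obtain
\[
\left|\sum_{n=-\infty}^{+\infty}\Phi(N,n)e^{2\pi int}\right|\leq \frac{1}{2(2N+1)\|t\|}\leq \frac{1}{4N\|t\|}.
\]
Since $N\|t\|>1$ gives $1+N\|t\|\leq 2N\|t\|$, and since $\vartheta\leq 1$, we have $(N\|t\|)^{\vartheta-1}\leq 1$, so
\[
\frac{1}{4N\|t\|}=\frac{(N\|t\|)^{\vartheta-1}}{4}\cdot(N\|t\|)^{-\vartheta}\leq \frac{2^{\vartheta}}{4}(1+N\|t\|)^{-\vartheta}\leq (1+N\|t\|)^{-\vartheta}.
\]
Combining the two ranges establishes hypothesis $(i)$ with a universal constant $K$ for any $\vartheta\leq 1$, so Theorems \ref{T-1}, \ref{T-2} and \ref{T-3} apply in this range of $\vartheta$.

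There is no substantial obstacle here, as everything reduces to the classical Dirichlet kernel estimate. The only point that requires a little care is matching the upper bound $(N\|t\|)^{-1}$ against the prescribed form $(1+N\|t\|)^{-\vartheta}$ uniformly in the two regimes $N\|t\|\lessgtr 1$, which is exactly where the restriction $\vartheta\leq 1$ enters. Moreover, this computation also explains the side remark made earlier in the introduction: for $t\notin\mathbb Z$ the limit
$\lim_{N\to\infty} N\,|\sum_n\Phi(N,n)e^{2\pi int}|=(2|\sin(\pi t)|)^{-1}$
is strictly positive, so via Theorem \ref{T-3} the exponent $\vartheta=1$ is the best possible for arithmetic means, confirming that $cN^{-1}$ cannot be improved.
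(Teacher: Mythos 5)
Your verification of hypothesis $(i)$ is correct and is essentially the computation the paper has in mind: reduce to the normalized Dirichlet kernel $\sin((2N+1)\pi t)/\big((2N+1)\sin(\pi t)\big)$, use the trivial bound $1$ when $N\Vert t\Vert\leq 1$ and the bound $\vert\sin(\pi t)\vert\geq 2\Vert t\Vert$ when $N\Vert t\Vert>1$; the paper merely asserts that one ``easily verifies'' the estimate, so your two-range argument fills in exactly that step, and the reduction from $\vartheta=1$ to all $\vartheta\leq1$ is the same. Where you diverge is in justifying that Theorem \ref{T-3} applies, i.e.\ that the rate $cN^{-1}$ cannot be improved. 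You argue via the positivity of $C(t)$, but your claim that $\lim_{N\to\infty}N\vert\sum_n\Phi(N,n)e^{2\pi int}\vert=(2\vert\sin(\pi t)\vert)^{-1}$ is not literally correct: the quantity $N(2N+1)^{-1}\vert\sin((2N+1)\pi t)\vert/\vert\sin(\pi t)\vert$ oscillates and has no limit (for rational $t$ with odd denominator it even vanishes along a subsequence), so you should replace the limit by the $\limsup$, which is indeed strictly positive for every $t\notin\mathbb{Z}$ (equal to $(2\vert\sin\pi t\vert)^{-1}$ for irrational $t$ by equidistribution, and positive for non-integer rational $t$ since the finitely many values attained are not all zero). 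With that correction your route through Theorem \ref{T-3} works. The paper, after noting that Theorem \ref{T-3} could be invoked, instead gives a more elementary and more general argument: if the $L^{1}$ discrepancy of the arithmetic means were smaller than $\frac{1}{2N}\int_{\mathbb{T}^d}\vert f-\int f\vert$ for two consecutive integers $N$ and $N+1$, the triangle inequality applied to the difference of the two partial sums yields a contradiction. That argument buys a statement valid for every nonconstant integrable $f$ and every $\alpha$, with no appeal to the Fourier-analytic quantity $C(t)$, whereas your approach is shorter but requires the $\limsup$ analysis of the kernel at the specific points $m\cdot\alpha$.
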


\begin{Cor}\label{C-2}
Let $ \Phi(N,n) = \left(\sum _{n=-\infty}^{+\infty}\Psi(N^{-1}n)  \right)^{-1}\Psi(N^{-1}n)$, with
\[
\Psi(t)  =\begin{cases}
1-\vert t\vert  & \text{if } \vert t\vert \leq1,\\
0 & \text{if } \vert t\vert >1.
\end{cases}
\]

Then Theorem \ref{T-1}, Theorem \ref{T-2} and Theorem \ref{T-3} apply with every $\vartheta \leq 2$.
\end{Cor}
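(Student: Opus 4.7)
The plan is to verify hypothesis $(i)$ of Theorems \ref{T-1} and \ref{T-2} at the endpoint value $\vartheta=2$; since the function $\vartheta\mapsto(1+N\|t\|)^{-\vartheta}$ is non-increasing in $\vartheta$ when $1+N\|t\|\geq 1$, the bound at $\vartheta=2$ immediately implies the bound at every smaller $\vartheta\in(0,2]$ with the same constant. For Theorem \ref{T-3} no hypothesis has to be verified, but one wants to check that the corresponding limsup $C(t)$ is positive on a set of positive measure, so that the conclusion becomes substantive.

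First I would compute the normalization. A telescoping computation gives $\sum_{n\in\mathbb{Z}}\Psi(N^{-1}n)=\sum_{|n|<N}(1-|n|/N)=N$, whence $\Phi(N,n)=N^{-1}(1-|n|/N)_{+}$. The key observation is that the resulting Fourier sum is a normalized Fej\'er kernel,
\[
\sum_{n=-\infty}^{+\infty}\Phi(N,n)e^{2\pi int}=\frac{1}{N^{2}}\left|\sum_{k=0}^{N-1}e^{2\pi ikt}\right|^{2}=\frac{1}{N^{2}}\left(\frac{\sin(\pi Nt)}{\sin(\pi t)}\right)^{2}.
\]

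The estimate in hypothesis $(i)$ then follows by combining two elementary bounds for this expression: the trivial upper bound $\leq 1$, valid since the weights are non-negative and sum to $1$, and the sharper bound $\leq(4N^{2}\|t\|^{2})^{-1}$, which follows from $|\sin(\pi t)|\geq 2\|t\|$. A routine case split on whether $N\|t\|\leq 1$ or $N\|t\|>1$ merges these into a single inequality of the form $\leq 4(1+N\|t\|)^{-2}$, which is condition $(i)$ with $K=4$ and $\vartheta=2$.

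Finally, for Theorem \ref{T-3} at $\vartheta=2$ one has $N^{2}\sum_{n}\Phi(N,n)e^{2\pi int}=(\sin(\pi Nt)/\sin(\pi t))^{2}$. Since $\{Nt\}_{N\geq 1}$ is equidistributed modulo $1$ for every irrational $t$, one obtains $\limsup_{N}\sin^{2}(\pi Nt)=1$, so $C(t)=|\sin(\pi t)|^{-2}>0$ for almost every $t$; the remark after Theorem \ref{T-3} then upgrades this to $C(m\cdot\alpha)>0$ for almost every $\alpha$ and every $m\in\mathbb{Z}^{d}\setminus\{0\}$, so Theorem \ref{T-3} indeed yields a non-trivial lower bound at the critical exponent. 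I do not anticipate any serious obstacle; the only point to be careful with is verifying that the trivial and sharp pointwise bounds on the Fej\'er kernel combine cleanly into the form required by hypothesis $(i)$.
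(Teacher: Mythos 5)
Your proposal is correct and follows essentially the same route as the paper: identify the sum as the normalized Fej\'er kernel $N^{-2}\sin^2(\pi Nt)/\sin^2(\pi t)$, verify hypothesis $(i)$ at $\vartheta=2$ via the trivial bound and the $|\sin(\pi t)|\geq 2\Vert t\Vert$ bound, and invoke monotonicity in $\vartheta$ for smaller exponents. The only (immaterial) difference is in the Theorem~\ref{T-3} step, where the paper shows $C(t)>0$ for \emph{every} $t$ (treating $t=0$, rational $t\neq 0$, and irrational $t$ separately), whereas you settle for almost every $t$ and pass through Lemma~\ref{L-1}; both suffice for the stated conclusion.
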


\begin{Cor}\label{C-3}
Let $\gamma>0$ and let $ \Phi(N,n) = \left(\sum _{n=-\infty}^{+\infty}\Psi(N^{-1}n)  \right)^{-1}\Psi(N^{-1}n)$, with 
\[
\Psi(t)  =\begin{cases}
(1-\vert t\vert^{2})^{\gamma} & \text{if } \vert t\vert \leq1,\\
0 & \text{if } \vert t\vert > 1.
\end{cases}
\]

Then Theorem \ref{T-1}, Theorem \ref{T-2} and Theorem \ref{T-3} apply with every $\vartheta \leq \gamma+1$.
\end{Cor}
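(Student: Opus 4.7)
The plan is to proceed exactly as for Corollaries~\ref{C-1} and~\ref{C-2}: use Poisson summation to reduce the discrete Fourier sum $\sum_{n}\Phi(N,n)e^{2\pi int}$ to values of the continuous Fourier transform $\widehat{\Psi}$, and then establish decay of $\widehat{\Psi}$ of order $\gamma+1$ via the classical Bessel-function identity.

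Concretely, Poisson summation applied to $x\mapsto\Psi(x/N)e^{2\pi ixt}$ yields
\[
\sum_{n\in\mathbb{Z}}\Psi(n/N)e^{2\pi int}=N\sum_{k\in\mathbb{Z}}\widehat{\Psi}\bigl(N(k-t)\bigr).
\]
Specializing to $t=0$ shows that the normalising denominator $\sum_{n}\Psi(n/N)$ equals $N\int_{-1}^{1}(1-s^{2})^{\gamma}ds$ up to an $O(N^{-\gamma})$ error, and is in particular bounded below by a positive multiple of $N$ for all $N\geq 1$. After dividing, I would isolate the integer $k^{\ast}$ closest to $t$, so that the principal term is comparable to $(1+N\|t\|)^{-\gamma-1}$, and bound the remaining series by $\sum_{j\geq 1}(1+N(j-1/2))^{-\gamma-1}=O(N^{-\gamma-1})$, which is admissible precisely because $\gamma>0$. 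Since in the fundamental domain $\|t\|\leq 1/2$ one has $(1+N\|t\|)^{-\gamma-1}\geq(1+N/2)^{-\gamma-1}$, the tail is absorbed into the principal term and yields
\[
\left|\sum_{n}\Phi(N,n)e^{2\pi int}\right|\leq K\bigl(1+N\|t\|\bigr)^{-\gamma-1}
\]
for some constant $K$. This verifies hypothesis $(i)$ of Theorems~\ref{T-1} and~\ref{T-2} with $\vartheta=\gamma+1$, and therefore with every $\vartheta\leq\gamma+1$.

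The key technical input is the decay of $\widehat{\Psi}$. I would invoke the classical identity
\[
\int_{-1}^{1}(1-t^{2})^{\gamma}e^{-2\pi i\xi t}\,dt=\sqrt{\pi}\,\Gamma(\gamma+1)(\pi|\xi|)^{-\gamma-1/2}J_{\gamma+1/2}(2\pi|\xi|),
\]
together with the bound $|J_{\nu}(x)|\leq Cx^{-1/2}$ for $x\geq 1$, yielding $|\widehat{\Psi}(\xi)|\leq C(1+|\xi|)^{-\gamma-1}$. For the applicability of Theorem~\ref{T-3} I would use the refined asymptotic $J_{\nu}(x)=\sqrt{2/(\pi x)}\cos(x-\nu\pi/2-\pi/4)+O(x^{-3/2})$, which gives
\[
N^{\gamma+1}\widehat{\Psi}(N\|t\|)=c\,\|t\|^{-\gamma-1}\cos\bigl(2\pi N\|t\|-\phi_{\gamma}\bigr)+o(1)
\]
for every $t$ with $\|t\|>0$. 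Taking the limsup in $N$ and dividing by the normaliser then produces $C(t)\geq c\,\|t\|^{-\gamma-1}>0$ on the full-measure set $\mathbb{T}\setminus\{0\}$, so Theorem~\ref{T-3} produces a non-trivial lower bound for every nonzero Fourier mode and almost every~$\alpha$ via Lemma~\ref{L-1}.

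The main obstacle is the Bessel identity itself: integration by parts alone only delivers decay of order $\lceil\gamma\rceil$, strictly weaker than $\gamma+1$ for non-integer $\gamma$, so the closed-form evaluation has to be invoked directly (provable, for instance, by expanding $e^{-2\pi i\xi t}$ in its power series and integrating term by term against the beta integral $\int_{-1}^{1}(1-t^{2})^{\gamma}t^{2k}\,dt$). Once the decay and oscillatory asymptotics of $\widehat{\Psi}$ are in hand, the rest is routine bookkeeping that parallels the proofs of Corollaries~\ref{C-1} and~\ref{C-2}.
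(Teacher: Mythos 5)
Your proposal is correct and follows essentially the same route as the paper: Poisson summation reduces the periodized kernel to values of $\widehat{\Psi}$, the Bessel identity for $\int_{-1}^{1}(1-t^{2})^{\gamma}e^{-2\pi i\xi t}\,dt$ together with the $|J_{\nu}(x)|\leq Cx^{-1/2}$ bound gives the decay $(1+N\Vert t\Vert)^{-\gamma-1}$ after isolating the nearest-integer term and absorbing the $O(N^{-\gamma-1})$ tail, and the oscillatory asymptotic of $J_{\gamma+1/2}$ plus Kronecker's theorem yields $C(t)>0$ for Theorem \ref{T-3}. The only cosmetic difference is that the paper works with the unnormalized sum and the scaled integral over $[-N,N]$ rather than with $\widehat{\Psi}$ directly.
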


\begin{Cor}\label{C-4}
Let 
\[
\Phi(N,n)  =\begin{cases}
\dfrac{(2N)!}{2^{2N} \left( N-n \right)! \left( N+n \right)! } & \text{if } 
\vert n\vert \leq N!,\\
0 & \text{if } \vert n\vert > N!.
\end{cases}
\]

Then Theorem \ref{T-1} and Theorem \ref{T-2} can be applied with every $\vartheta$, but with $[\sqrt{N}]$ instead of $N$, that is the relations between the indexes $d$, $\delta$, $\vartheta$ and $\sigma$ are the ones in the theorems, but the speed of convergence is $cN^{-\frac{\vartheta}{2}}$ instead of $cN^{-\vartheta}$.
\end{Cor}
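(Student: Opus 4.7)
The plan is to compute the Fourier transform of the weights in closed form, observe that it has Gaussian decay in $t$ on scale $1/\sqrt{N}$, and then apply Theorems \ref{T-1} and \ref{T-2} with $[\sqrt{N}]$ in place of $N$.

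Since $\binom{2N}{N-n}$ vanishes unless $|n|\le N$, the auxiliary cut-off $|n|\le N!$ is vacuous. The change of index $k=N-n$ together with the binomial theorem then gives
\[
\sum_{n=-\infty}^{+\infty}\Phi(N,n)\,e^{2\pi int}=\frac{e^{2\pi iNt}(1+e^{-2\pi it})^{2N}}{2^{2N}}=\cos^{2N}(\pi t),
\]
where the last equality uses $1+e^{-2\pi it}=2e^{-\pi it}\cos(\pi t)$. In particular setting $t=0$ confirms \eqref{condizione}.

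Next I would estimate this Fourier transform by a Gaussian. Since $\cos$ is even and $1$-periodic, $|\cos(\pi t)|=\cos(\pi\|t\|)$, and on $[0,1/2]$ the concavity of $\sin$ on $[0,\pi/2]$ gives $\sin(\pi u/2)\ge u$, hence $\cos(\pi u)=1-2\sin^{2}(\pi u/2)\le 1-2u^{2}\le e^{-2u^{2}}$. Therefore $|\cos^{2N}(\pi t)|\le e^{-4N\|t\|^{2}}$. Since a Gaussian is dominated by any inverse power, for every $\vartheta>0$ there is a constant $K_\vartheta$ such that $e^{-4s^{2}}\le K_\vartheta(1+s)^{-\vartheta}$ for all $s\ge 0$; substituting $s=[\sqrt{N}]\,\|t\|$ and using $[\sqrt{N}]^{2}\le N$ yields
\[
\left|\sum_{n=-\infty}^{+\infty}\Phi(N,n)e^{2\pi int}\right|\le K_\vartheta\bigl(1+[\sqrt{N}]\,\|t\|\bigr)^{-\vartheta}
\]
for every $\vartheta>0$.

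Hypothesis $(i)$ of both Theorem \ref{T-1} and Theorem \ref{T-2} is now fulfilled with $[\sqrt{N}]$ playing the role of $N$. Since the parameter $N$ enters those theorems only through this Fourier decay estimate, rereading their proofs with $[\sqrt{N}]$ substituted for $N$ in the decay bound gives
\[
\sup_{x\in\mathbb T^d}\left|\mathcal D^{\Phi,\alpha}_N f(x)\right|\le c\,[\sqrt{N}]^{-\vartheta}X(d,\delta,\vartheta,\sigma,[\sqrt{N}])\|f\|_{\delta,2},
\]
which is $cN^{-\vartheta/2}$ up to constants and the $X$-factor, for every $\vartheta>0$. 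The only non-routine step is the explicit identification of the Fourier transform as $\cos^{2N}(\pi t)$; after that, the Gaussian bound and the application of Theorems \ref{T-1} and \ref{T-2} are essentially bookkeeping.
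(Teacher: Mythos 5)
Your proposal is correct and follows essentially the same route as the paper: identify the kernel as $\cos^{2N}(\pi t)$ via the binomial theorem, dominate it by a Gaussian on scale $N^{-1/2}$, and then dominate the Gaussian by $(1+[\sqrt{N}]\,\Vert t\Vert)^{-\vartheta}$ for every $\vartheta$. If anything, your elementary chain $\cos(\pi u)\le 1-2u^{2}\le e^{-2u^{2}}$ makes rigorous a step the paper only sketches through the asymptotic $\cos^{2N}(\pi t)\approx e^{-\pi^{2}Nt^{2}}$.
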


The next corollary shows that, up to a small logarithmic transgression, Kronecker sequences associated to vectors $\alpha$ which satisfy the hypothesis $(ii)$ in Theorem \ref{T-2} with $\sigma=d$ give optimal quadrature rules for Sobolev functions. See \cite{BCCGST} for results about quadrature rules for Sobolev functions. See also \cite{EEGGP} and \cite{GG} for results about existence of optimal quadrature rules.

\begin{Cor}\label{C-5}
Let $ \Phi(N,n) = \left(\sum _{n=-\infty}^{+\infty}\Psi(N^{-1}n)  \right)^{-1}\Psi(N^{-1}n)$, with  $\Psi(t)$ a  smooth compactly supported bounded function satisfying 
$cN\leq \left| \sum _{n=-\infty}^{+\infty}\Psi(N^{-1}n) \right| \leq CN$. Let $\alpha$ be an irrational vector in $\mathbb{R}^{d}$ and assume also that there exist constants $H>0$ and $\sigma\geq d$ such that $\Vert \alpha\cdot m\Vert \geq H\vert m\vert ^{-\sigma}$ for every $m\in\mathbb{Z}^{d}\setminus\{  0\}  $. Finally assume that $\delta>d/2$ and set
\begin{align*}
&Y(d,\delta, \sigma,N)  =\begin{cases}
N^{\frac{d}{\sigma-d}(\frac12-\frac\delta\sigma)}& \text{if }\delta/\sigma<1/2,\\
\log(1+N) &\text{if }\delta/\sigma=1/2,\\
\log^{\frac{1}{2}}(1+N)  &\text{if } \delta/\sigma>1/2.
\end{cases}
\end{align*}

Then there exists a positive constant $c$ such that
\[
\sup_{x\in\mathbb T^d}\left\{ \left|\mathcal D^{\Phi, \alpha}_Nf(x) \right|\right\}\leq c\|f\|_{\delta, 2} N^{-\frac{\delta}{\sigma}} Y(d,\delta,\sigma, N).
\]

When $\sigma=d$ the speed of convergence $cN^{-\frac{\delta}{d}}$ cannot be improved in the sense that there exists $c>0$ such that for every distribution of points $\{z(n)\}_{n=1}^{N}$ and weights $\{\omega(n)\}_{n=1}^{N}$ there exist nonconstant functions in $W^{\delta,2}(\mathbb{T}^{d})$ with 
\[
\left|\sum _{n=1}^{N}\omega(n)f(z(n))  -\int _{\mathbb{T}^{d}}f(y)dy\right| \geq cN^{-\frac{\delta}{d}}\Vert f\Vert _{\delta, 2}.
\]
\end{Cor}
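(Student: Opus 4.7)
The proof has two parts: the upper bound follows from Theorem \ref{T-2} after optimizing over $\vartheta$, and the sharpness statement at $\sigma = d$ requires constructing an extremal test function.

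For the upper bound, I first verify that hypothesis $(i)$ of Theorem \ref{T-2} holds with every $\vartheta > 0$. By Poisson summation
\[
\sum_n \Psi(N^{-1}n)\,e^{2\pi int} = N\sum_m \widehat{\Psi}(N(m-t)),
\]
and since $\widehat{\Psi}$ is Schwartz, the term with $m$ closest to $t$ contributes a multiple of $(1+N\|t\|)^{-\vartheta}$ while the remaining terms (where $|m-t|\geq 1/2$) are dominated by $N^{-\vartheta}$ and hence also by $(1+N\|t\|)^{-\vartheta}$. After dividing by the normalization $\sum_k \Psi(N^{-1}k)\approx N$, hypothesis $(i)$ holds for every $\vartheta$. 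I then optimize in Theorem \ref{T-2}: when $\delta/\sigma > 1/2$, take $\vartheta = \delta/\sigma$ (the boundary $\delta = \vartheta\sigma$, giving $X = \log^{1/2}(1+N)$); when $\delta/\sigma = 1/2$, take $\vartheta = 1/2$ (the case $\delta = \sigma/2$, giving $X = \log(1+N)$); when $\delta/\sigma < 1/2$, take any $\vartheta \in ((\delta - d/2)/(\sigma - d),\,1/2)$, landing in the sub-case $\delta < \vartheta\sigma - d(\vartheta-1/2)$. A direct calculation shows that the total exponent
\[
-\vartheta + \frac{-\delta + \vartheta\sigma - d\vartheta + d/2}{\sigma - d} = -\frac{\delta - d/2}{\sigma - d}
\]
is independent of the chosen $\vartheta$, which matches $-\delta/\sigma$ plus the exponent of $Y$ in each case.

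For the sharpness at $\sigma = d$, fix a nonnegative bump $\phi \in C_c^\infty(\mathbb{R}^d)$ supported in $B(0, 1/4)$ with $\int\phi>0$. Let $M$ be the smallest integer with $M^d \geq 2N$, so $M \approx N^{1/d}$, and let $y_j = j/M$ for $j$ in the grid $\{0,\dots,M-1\}^d$. The bumps $\phi_j(x) = \phi(M(x - y_j))$ have pairwise disjoint supports inside $\mathbb{T}^d$; since each $z(n)$ lies in at most one support, the index set $J = \{j : \phi_j(z(n)) = 0 \text{ for all } n\}$ satisfies $|J| \geq M^d - N \geq N$. Set $f = \sum_{j\in J}\phi_j$, a nonconstant function that vanishes on every $z(n)$. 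The quadrature error then equals $\int f = |J|\,M^{-d}\int\phi \geq c_0 > 0$, while the scaling $\|\phi(M\,\cdot)\|_{W^{\delta,2}(\mathbb{T}^d)} \approx M^{\delta - d/2}$ combined with disjointness of supports gives $\|f\|_{\delta,2} \leq C\,|J|^{1/2} M^{\delta-d/2} \leq C' N^{\delta/d}$. Dividing, $|\mathrm{error}|/\|f\|_{\delta,2} \geq c\, N^{-\delta/d}$, which is the sharp lower bound claimed.

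The main technical subtlety is the estimate $\|f\|_{\delta, 2} \lesssim |J|^{1/2} M^{\delta - d/2}$ for non-integer $\delta$: for integer $\delta$ it follows immediately from $\|\partial^\alpha f\|_{L^2}^2 = \sum_{j \in J}\|\partial^\alpha \phi_j\|_{L^2}^2$ by disjointness of supports, but the naive triangle inequality only yields $|J|\,M^{\delta-d/2}$, off by a factor $\sqrt{|J|} \approx \sqrt{N}$ which would ruin the sharp exponent. The correct square-root bound, reflecting the approximate $L^2$-orthogonality of the $\phi_j$, extends to fractional $\delta$ by a short Fourier or Littlewood--Paley argument. Once this is in hand, the rest of the proof consists of routine computations.
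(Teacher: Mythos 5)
Your proof is correct. For the upper bound you follow exactly the paper's route: verify hypothesis $(i)$ of Theorem \ref{T-2} for every $\vartheta$ via Poisson summation and the rapid decay of $\widehat{\Psi}$, then optimize the choice of $\vartheta$ ($\vartheta=\delta/\sigma$ when $\delta/\sigma\geq 1/2$, any $\vartheta$ in $\bigl((\delta-d/2)/(\sigma-d),\,1/2\bigr)$ when $\delta/\sigma<1/2$); your observation that the resulting exponent $-(\delta-d/2)/(\sigma-d)$ is independent of the particular $\vartheta$ chosen in the sub-half case, and agrees with $-\delta/\sigma$ plus the exponent of $Y$, is a worthwhile sanity check the paper leaves implicit. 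Where you genuinely diverge is the sharpness statement at $\sigma=d$: the paper does not prove it at all but simply cites \cite{BCCGST}, whereas you give a self-contained construction --- a grid of $\approx 2N$ disjointly supported bumps, at least $N$ of which miss every node $z(n)$, so that the quadrature sum vanishes while the integral stays bounded below and the Sobolev norm is controlled by $N^{\delta/d}$ via almost-orthogonality of the bumps. This is the standard argument behind the cited result, and your version is complete modulo the one technical point you correctly flag: the bound $\|\sum_{j\in J}\phi_j\|_{\delta,2}\lesssim |J|^{1/2}M^{\delta-d/2}$ for non-integer $\delta$, where the naive triangle inequality loses a factor $|J|^{1/2}$; this does follow from disjointness of supports by interpolation between integer orders or a Littlewood--Paley argument, as you indicate, but it is the step that must actually be written out if one wants the corollary to be self-contained rather than a citation.
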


The above corollaries show that it is quite easy to exhibit examples of weights $\Phi(N,n)$ that satisfy the assumptions in Theorem \ref{T-1} and Theorem \ref{T-2}. It is less immediate to construct weights $\Phi(N,n)$ that satisfy the reverse assumption, in particular the ones in Theorem \ref{T-5} and Theorem \ref{T-6}. However, such weights exist; see Remark \ref{R-1}. We include in the paper an appendix where we shall consider the logarithmic means defined by the weights 
 \[ \Phi(N,n) =\begin{cases} \left(\displaystyle \sum\limits_{m=1}^{N}\dfrac{1}{m} \right)^{-1} \dfrac{1}{n} & \text{if } 1\leq n\leq N\\  0 & \text{otherwise}.\end{cases}\] 
Although these logarithmic means do not satisfy exactly the assumptions in Theorem \ref{T-1} and Theorem \ref{T-2}, the proofs of these theorems can be adapted.
\begin{comment}
\vecchio{
We conclude the introduction with a few remarks on our results.
In Theorem \ref{T-1} and Theorem \ref{T-2} it is required that $\delta>d/2$. This guarantees that the functions in $W^{\delta,2}(\mathbb{T}^{d})$ have absolutely convergent Fourier expansions. Hence, such functions are continuous and their pointwise evaluation is well-defined. Notice also that if $\delta\leq d/2$ the space $W^{\delta,2}(\mathbb{T}^{d})$ contains unbounded functions and $f(x+n\alpha)$ may be arbitrary large. Hence, the condition $\delta>d/2$ is necessary. 
Both Theorem \ref{T-1} and Theorem \ref{T-2} with $\vartheta\geq 1$ guarantee a speed of convergence $cN^{-\vartheta}$. However, the assumption on the function $f(x)$ in these theorems are different, the Sobolev index of smoothness $\delta>d\vartheta -d/2$ in Theorem \ref{T-1} is allowed to be smaller than the index $\delta>\vartheta\sigma\geq d\vartheta$ in Theorem \ref{T-2}. On one hand, as it will be seen in the proof, the conclusion in Theorem \ref{T-1} holds for almost every $\alpha$, with $\alpha$ depending on the given function $f(x)$ one is considering. On the other hand, by Khintchine's Theorem, also Theorem \ref{T-2} holds for almost every  $\alpha$, but this time $\alpha$ can be chosen independently of the function. }
\end{comment}
  
To conclude, the above results may have continuous analogues where the discrete means are replaced by continuous means,
\[
\mathcal C^{\Phi, \alpha}_T f(x)= \int _{-\infty}^{+\infty}\Phi(T,t)  f(x+t\alpha)dt-\int _{\mathbb{T}^{d}}
f(y)dy.
\]

We plan to investigate such operator in future works.

\medskip

In the next section we provide the proofs of our main theorems, whereas in Section \ref{final_section} we conclude with some final remarks.

\section{Proofs of the main results}
To prove Theorem \ref{T-1} we need an elementary lemma.

\begin{lemma}\label{L-1}
 If $g(t)$ is a periodic locally integrable function on $\mathbb T$, then, for every $m\in\mathbb Z^d\backslash\{0\}$, one has
 \[
\int_{\mathbb T^d}g(m\cdot\alpha)\, d\alpha=\int_{\mathbb T}g(t)\, dt.
 \]

More precisely, if $g(t)$ is a measurable function on $\mathbb T$, then, for every $m\in\mathbb Z^d\backslash\{0\}$, the functions $g(t)$, $t\in\mathbb{T}$, and $g(m\cdot\alpha)$, $\alpha\in\mathbb{T}^d$, have the same distribution function. Namely, for every $-\infty<s<+\infty$,
\[
\left\vert \left\{t\in\mathbb{T}, \ g(t)>s \right\} \right\vert
= \left\vert \left\{\alpha\in\mathbb{T}^d, \ g(m\cdot\alpha)>s \right\} \right\vert.
\]
\end{lemma}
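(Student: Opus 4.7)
The plan is to reduce the $d$-dimensional integral to a one-dimensional integral by Fubini and a change of variables in a single coordinate. By permuting the components of $m$ if necessary, I may assume that one fixed coordinate of $m$ is nonzero, say $m_d \neq 0$. Writing $\alpha=(\alpha_1,\ldots,\alpha_{d-1},\alpha_d)$, I would apply Fubini's theorem to obtain
\[
\int_{\mathbb T^d} g(m\cdot\alpha)\, d\alpha \;=\; \int_{\mathbb T^{d-1}}\!\!\left(\int_{\mathbb T} g\bigl(c(\alpha_1,\ldots,\alpha_{d-1})+m_d\alpha_d\bigr)\, d\alpha_d\right) d\alpha_1\cdots d\alpha_{d-1},
\]
where $c(\alpha_1,\ldots,\alpha_{d-1})=m_1\alpha_1+\cdots+m_{d-1}\alpha_{d-1}$.

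The key observation is that, for each fixed constant $c\in\mathbb T$, the map $\alpha_d\mapsto c+m_d\alpha_d\pmod 1$ is an $|m_d|$-to-$1$ covering of $\mathbb T$ onto itself which preserves the Haar measure. Hence by translation invariance and the $|m_d|$-to-$1$ property,
\[
\int_{\mathbb T} g(c+m_d\alpha_d)\, d\alpha_d \;=\; \int_{\mathbb T} g(t)\, dt,
\]
a quantity independent of $c$ and therefore of $(\alpha_1,\ldots,\alpha_{d-1})$. Plugging back into the iterated integral immediately yields the identity $\int_{\mathbb T^d} g(m\cdot\alpha)\, d\alpha=\int_{\mathbb T} g(t)\, dt$.

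For the stronger distributional statement, I would simply apply the identity just proved to the characteristic function $g_s(t)=\chi_{\{g>s\}}(t)$, which is measurable and periodic whenever $g$ is. This gives
\[
\bigl|\{\alpha\in\mathbb T^d : g(m\cdot\alpha)>s\}\bigr|=\int_{\mathbb T^d}\chi_{\{g>s\}}(m\cdot\alpha)\, d\alpha=\int_{\mathbb T}\chi_{\{g>s\}}(t)\, dt=\bigl|\{t\in\mathbb T : g(t)>s\}\bigr|,
\]
establishing equality of distribution functions. There is no real obstacle here: the only technical point is verifying that the one-dimensional map $\alpha_d\mapsto c+m_d\alpha_d\pmod 1$ is measure-preserving as a map $\mathbb T\to\mathbb T$, which follows directly from the translation invariance of Lebesgue measure on $\mathbb T$ together with the fact that multiplication by a nonzero integer $m_d$ is a covering of degree $|m_d|$.
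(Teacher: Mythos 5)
Your proposal is correct and follows essentially the same route as the paper: reduce to one dimension via Fubini using a nonzero coordinate of $m$, invoke the fact that $t\mapsto ht+u \pmod 1$ preserves Haar measure on $\mathbb T$ for $h\in\mathbb Z\setminus\{0\}$, and then deduce the distributional statement by applying the integral identity to the characteristic function of the upper level set. No issues.
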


\begin{proof}
 By periodicity and a change of variables, for every non-zero integer $h$ and real number $u$, one has 
 \[
  \int_{\mathbb T}g(ht+u)\, dt=\int_{\mathbb T} g(t)\, dt.
 \]

 Hence, setting $m=(h,k)$ with $h\in\mathbb Z\backslash\{0\}$ and $k\in\mathbb Z^{d-1}$, and $\alpha=(\beta,\gamma)\in\mathbb R\times\mathbb R^{d-1}$, one obtains
\begin{align*}
\int_{\mathbb T^d}g(m\cdot\alpha)\, d\alpha&=\int_{\mathbb T^{d-1}}\bigg(\int_{\mathbb T}g(h\beta+k\cdot\gamma)\, d\beta\bigg)\, d\gamma\\
&=\int_{\mathbb T^{d-1}}\bigg(\int_{\mathbb T} g(t)\, dt\bigg)\, d\gamma=\int_{\mathbb T}g(t)\, dt.
\end{align*}

The distribution functions of $g(t)$ and $g(m\cdot\alpha)$ are seen to be equal by applying the above identity to the characteristic function of the upper level set
$\chi_{\{t\in\mathbb{T},\ g(t)>s\}}(t)$. 
\end{proof}
We now prove our first main result.
\begin{proof}[Proof of Theorem \ref{T-1}]
First observe that for every $0<p< 2$ one has
\[
\sum_{m\in\mathbb Z^d}|\widehat f(m)|^{p}%&=\sum_{m\in\mathbb Z^d}\left(|\widehat f(m)| \left(1+|m|^2 \right)^{\delta/2}\right)^{p} \left(1+|m|^2 \right)^{-p \delta/2}\\
\leq \left(\sum_{m\in\mathbb Z^d} | \widehat f(m) |^2 \left(1+|m|^2 \right)^{\delta}\right)^{\frac{p}{2}}\left(\sum_{m\in\mathbb Z^d} \left(1+|m|^2 \right)^{-p \delta/(2-p)}\right)^{\frac{2-p}{2}}.
\]

The first factor is the Sobolev norm of $f(x)$, whereas the second series converges provided that $2p\delta/(2-p)>d$. In particular, for $p=1$ and $\delta>d/2$ one sees that the Fourier expansion of $f(x)$ converges absolutely. This fact and the compact support of $n \rightarrow \Phi(N,n)$ assure the pointwise identity
\begin{align*}
\mathcal D^{\Phi, \alpha}_Nf(x)
&=\sum _{n=-\infty}^{+\infty}\Phi(N,n)f(x+n\alpha)-\int _{\mathbb{T}^{d}}f(y)dy\\
&=\sum _{m\in\mathbb{Z}^{d}\setminus\{ 0\}} 
\left(  \sum _{n=-\infty}^{+\infty} \Phi(N,n)e^{2\pi inm\cdot\alpha} \right) \widehat{f}(m)e^{2\pi im\cdot x}.
\end{align*}

Hence, thanks to $(i)$, one has
\[
\left| \mathcal D^{\Phi,\alpha}_Nf(x) \right| \leq  KN^{-\vartheta}\sum_{m\in\mathbb Z^d\backslash\{0\}}| \widehat f(m) | \|m\cdot\alpha\|^{-\vartheta} =c(f,\alpha)N^{-\vartheta}.
\]

In order to show that the constants $c(f,\alpha)$ are finite for almost every $\alpha$ it suffices to show that the series defining these constants converges absolutely for almost every $\alpha$. By the previous lemma the functions $\alpha\mapsto \|m\cdot \alpha\|^{-\vartheta}$ are in $L^p(\mathbb T^d)$ for every $p<1/\vartheta$ with norm independent of $m$,
\[ \int_{\mathbb{T}^d} \left(\| m\cdot\alpha\|^{-\vartheta}\right)^{p}d\alpha=\int_{\mathbb{T}} \| t\|^{-\vartheta p}dt=\int_{0}^{\frac{1}{2}} t^{-\vartheta p}dt=\dfrac{2^{\vartheta p-1}}{1-\vartheta p}.\]

If $0<\vartheta<1$, then the functions the functions $\alpha\mapsto \|m\cdot \alpha\|^{-\vartheta}$ are integrable and the series
\[
\sum_{m\in\mathbb Z^d\setminus\{0\}}|\widehat f(m)|\|m\cdot\alpha\|^{-\vartheta}
\]
converges provided that 
\[
\sum_{m\in\mathbb Z^d}|\widehat f(m)|<\infty.
\]
As observed before, this holds true if $\delta>d/2$.

\medskip

If $\vartheta\geq 1$, then $0<p<1/\vartheta\leq1$, and, by the inequality $|a+b|^p\leq |a|^p+|b|^p$, the series
\[
\sum_{m\in\mathbb Z^d\setminus\{0\}}|\widehat f(m)|\|m\cdot\alpha\|^{-\vartheta}
\]
converges for almost every $\alpha$ and in the $L^p(\mathbb T^d)$ quasinorm provided that 
\[
\sum_{m\in\mathbb Z^d}|\widehat f(m)|^p<\infty.
\]

As observed at the beginning of the proof this happens  for every $0<p<2$ whenever  $2p\delta/(2-p)>d$, from which one obtains $\delta>d(1/p-1/2)>d\vartheta-d/2$.
\end{proof}

A key ingredient in the proof of Theorem \ref{T-2} is a classical result in Diophantine approximation. Let $\gamma$ be an irrational number. If the sequence $\{\Vert \gamma n\Vert\}_{n=1}^{N}$ is well-distributed in $0\leq t\leq1/2$ as it is distributed the sequence $\{  n/(2N)\}_{n=1}^{N}$, then one can guess that
\[
\sum_{n=1}^{N}\Vert \gamma n\Vert ^{-\vartheta}\approx\sum _{n=1}^{N}\left(\dfrac{n}{2N}\right)^{-\vartheta}\leq\begin{cases}
cN & \text{if }0<\vartheta<1,\\
cN\log(N)  & \text{if }\vartheta=1,\\
cN^{\vartheta} & \text{if }1<\vartheta<+\infty.
\end{cases}
\]

Under suitable Diophantine assumptions on $\gamma$ the above conjectured estimate is correct. The following lemma is a variant of known results (see e.g. \cite[Chapter 3]{Lang}).

\begin{lemma}\label{L-2}
Assume that $\alpha=\left(\alpha_1,\ldots,\alpha_d\right)\in\mathbb R^d$ is an irrational vector, that is, $1,\alpha_1,\ldots,\alpha_d$ are linearly independent over the rationals, and assume that there exist constants $H>0$ and $\sigma\geq d$ such that $\Vert \alpha\cdot m\Vert \geq H\vert m\vert^{-\sigma}$ for every $m\in\mathbb{Z}^{d}\setminus\{  0\}$. Then there exists a positive constant $c$ such that, for every $R\geq1$,
\[\sum _{0<\vert m\vert <R}\Vert \alpha\cdot m\Vert ^{-\vartheta}\leq\begin{cases}
cR^{\vartheta\sigma+d(1-\vartheta)} & \text{if } 0<\vartheta<1,\\
cR^{\sigma}\log(R)  & \text{if }\vartheta=1,\\
cR^{\vartheta\sigma} & \text{if }1<\vartheta<+\infty.
\end{cases}
\]
\end{lemma}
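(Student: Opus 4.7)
The plan is to combine the Diophantine lower bound on $\|\alpha\cdot m\|$ with a simple counting argument in frequency space, then decompose the sum dyadically according to the size of $\|\alpha\cdot m\|$.

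First I would establish a separation property: if $m\neq m'$ are two points in $\mathbb Z^d$ with $|m|,|m'|<R$, then $m-m'\neq 0$ and $|m-m'|<2R$, so by the hypothesis
\[
\|\alpha\cdot m - \alpha\cdot m'\|_{\mathbb T}=\|\alpha\cdot(m-m')\|\geq H(2R)^{-\sigma}.
\]
Consequently the image points $\{\alpha\cdot m\bmod 1:0<|m|<R\}$ are $H 2^{-\sigma}R^{-\sigma}$–separated in $\mathbb T$. Since $\|\alpha\cdot m\|$ is the distance from $\alpha\cdot m$ to the nearest integer, this separation combined with a pigeonhole argument on the circle yields
\[
\#\bigl\{m\in\mathbb Z^d:\, 0<|m|<R,\ \|\alpha\cdot m\|\leq\lambda\bigr\}\leq c_1\lambda R^{\sigma}+c_1,
\]
for a constant $c_1$ depending only on $H,\sigma$. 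Of course the total number of $m$ with $0<|m|<R$ is at most $c_2 R^d$, which gives the competing bound.

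Next I would decompose dyadically. The Diophantine hypothesis forces $\|\alpha\cdot m\|\geq H R^{-\sigma}$ whenever $0<|m|<R$, so there is a largest index $K\sim \sigma\log_2 R$ such that the relevant scales are $2^{-k-1}\leq\|\alpha\cdot m\|< 2^{-k}$ for $0\leq k\leq K$. Writing
\[
A_k=\bigl\{m:\,0<|m|<R,\ 2^{-k-1}\leq\|\alpha\cdot m\|<2^{-k}\bigr\},
\]
the counting step gives $\#A_k\leq C\min(R^d,\,2^{-k}R^{\sigma})$, where the two bounds cross at $2^k\sim R^{\sigma-d}$. Therefore
\[
\sum_{0<|m|<R}\|\alpha\cdot m\|^{-\vartheta}\lesssim \sum_{0\leq k\leq(\sigma-d)\log_2 R} 2^{k\vartheta}R^d+\sum_{(\sigma-d)\log_2 R<k\leq\sigma\log_2 R} 2^{k(\vartheta-1)}R^{\sigma}.
\]

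The final step is to evaluate these two geometric sums in each of the three regimes. For $0<\vartheta<1$, the first sum is geometric with ratio $2^{\vartheta}>1$ and is dominated by $k=(\sigma-d)\log_2 R$, giving $R^{(\sigma-d)\vartheta+d}=R^{\vartheta\sigma+d(1-\vartheta)}$; the second has ratio $2^{\vartheta-1}<1$ and is dominated by its smallest term, giving the same bound. For $\vartheta=1$ the first sum telescopes to $R^{\sigma}$ and the second contributes an extra $\log R$ coming from the $\sim d\log R$ equal terms. For $\vartheta>1$ both sums are geometric with ratios $>1$ and are dominated by their largest term at $k\sim\sigma\log_2 R$, producing $R^{\vartheta\sigma}$, which for $\vartheta\geq 1$ and $\sigma\geq d$ absorbs the first contribution $R^{(\sigma-d)\vartheta+d}$.

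The only potential obstacle is the separation/counting step, where one must be careful that $\|\cdot\|$ on $\mathbb T$ is not a norm but only satisfies a triangle inequality up to a factor of $2$; this is exactly what the estimate $|r_m-r_{m'}|\leq\|\alpha\cdot m\|+\|\alpha\cdot m'\|$ handles. Once the counting estimate $\#A_k\lesssim\min(R^d,2^{-k}R^{\sigma})$ is in place, the remainder is a straightforward case analysis.
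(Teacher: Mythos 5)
Your proof is correct and rests on the same key mechanism as the paper's: the Diophantine hypothesis forces the values $\alpha\cdot m \bmod 1$, $0<|m|<R$, to be $cR^{-\sigma}$-separated, and this is played off against the trivial count $\#\{0<|m|<R\}\lesssim R^d$. The paper packages this as ``at most one term per interval of length $H(2R)^{-\sigma}$, worst case all terms in the first $cR^d$ intervals'' and sums $\sum_{n\le cR^d}(nH(2R)^{-\sigma})^{-\vartheta}$ directly, while you phrase it as a counting bound $\#A_k\lesssim\min(R^d,2^{-k}R^\sigma)$ followed by a dyadic summation; these are the same argument in different bookkeeping, and your case analysis of the geometric sums checks out.
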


\begin{proof} By the assumptions $\Vert \alpha\cdot m\Vert \geq H\vert m\vert ^{-\sigma}$ and $\vert m\vert <R$, the interval $[0,H/(2R)^{\sigma})$ does not contain any term of the sequence $\{\Vert \alpha\cdot m\Vert \}_{0<\vert m\vert <R}$. Moreover, for every integer $n$ such that $0<n<2^{\sigma-1}H^{-1}R^{\sigma}$ the interval $I_{n ,R}^{\sigma}=[nH/(2R)^{\sigma},(n+1)H/(2R)^{\sigma})$ contains at most one term of such sequence. Indeed, if there are two terms in the interval, then there are integer points $p\neq q$ with $\vert p\vert, \vert q\vert <R$, and integers $u$ and $v$ such that
\[
\vert (\alpha\cdot p-u)\pm (\alpha\cdot q-v)\vert <\dfrac{H}{(2R)^{\sigma}}.
\]

The signum is minus if $\alpha\cdot p$ and $\alpha\cdot q$ approximate the nearest integers $u$ and $v$ both from above or both from below, the signum is plus if one approximation is from above and the other from below. Hence,
\[
\Vert \alpha\cdot(p\pm q)\Vert <\dfrac{H}{(2R)^{\sigma}}.
\]

But $\vert p\pm q\vert <2R$, and this contradicts the assumption $\Vert \alpha\cdot m\Vert \geq H\vert m\vert ^{-\sigma}$. 
% Observe now that if $\Vert \alpha\cdot m\Vert $ is in $[nH/(2R)^{\sigma},(n+1)H/(2R)^{\sigma})$, then $\Vert\alpha\cdot m\Vert ^{-\vartheta}\leq(nH/(2R)^{\sigma})^{-\vartheta}$.
Notice that the number of intervals $I_{n ,R}^{\sigma}$'s is of the order of $cR^{\sigma}$, whereas the number of integer points in the punctured ball $\{0<\vert m\vert <R\}  $ is about $cR^{d}$, and recall also that $\sigma\geq d$. Observe that one has the worst estimate when the terms of the sequence $\{\Vert\alpha\cdot m\Vert\}_{0<\vert m\vert <R}$ are concentrated in the the first $0<n\leq cR^{d}$ intervals. In conclusion,
\[\sum _{0<\vert m\vert <R} \Vert \alpha\cdot m\Vert^{-\vartheta}\leq\sum_{0<n\leq cR^{d}}\left(\dfrac{nH}{(2R)^{\sigma}}\right)^{-\vartheta}\leq
\begin{cases}
cR^{\vartheta\sigma+d(1-\vartheta)} & \text{if }0<\vartheta<1,\\
cR^{\sigma}\log(1+R)  & \text{if }\vartheta=1,\\
cR^{\vartheta\sigma} & \text{if }1<\vartheta<+\infty.
\end{cases}
\]
\end{proof}

\begin{proof}[Proof of Theorem \ref{T-2}]
As in the proof of Theorem \ref{T-1} one has the pointwise identity
\begin{align*}
\mathcal D^{\Phi, \alpha}_Nf(x)
&=\sum _{n=-\infty}^{+\infty}\Phi(N,n)f(x+n\alpha)-\int _{\mathbb{T}^{d}}f(y)dy\\
&=\sum _{m\in\mathbb{Z}^{d}\setminus\{  0\}}\left(  \sum _{n=-\infty}^{+\infty}
\Phi(N,n)e^{2\pi inm\cdot\alpha}  \right)\widehat{f}(m)e^{2\pi im\cdot x}.
\end{align*}
% \begin{align*}
% &\Big(\sum _{n=-\infty}^{+\infty}\Phi(N^{-1}n)\Big)^{-1}\sum _{n=-\infty}^{+\infty}\Phi(N^{-1}n)f(x+n\alpha)-\int _{\mathbb{T}^{d}}
% f(y)dy\\
% &=\sum _{m\in\mathbb{Z}^{d}\setminus\{  0\}}\Big(  \Big(\sum _{n=-\infty}^{+\infty}\Phi(N^{-1}n)\Big)^{-1}\sum _{n=-\infty}^{+\infty}
% \Phi(N^{-1}n)e^{2\pi inm\cdot\alpha}  \Big)\widehat{f}(m)e^{2\pi im\cdot x}.
% \end{align*}

Hence, by Cauchy's inequality and assumption $(i)$, one has the estimate
\begin{align*}
\left| \mathcal D^{\Phi, \alpha}_N  f(x) \right| &\leq \left( \sum _{m\in\mathbb{Z}^{d}\setminus\{  0\}}\vert m\vert^{-2\delta} \left\vert \sum _{n=-\infty}^{+\infty}\Phi(N,n)  e^{2\pi inm\cdot\alpha}\right\vert ^{2}\right)  ^{\frac{1}{2}}\|f\|_{\delta, 2} \\
&\leq K\left(\sum _{m\in\mathbb{Z}^{d}\setminus\{  0\}}\vert m\vert ^{-2\delta}(1+N\Vert m\cdot\alpha\Vert)^{-2\vartheta}\right)^{\frac{1}{2}}\|f\|_{\delta, 2}.
\end{align*}

By Lemma \ref{L-2}, for every positive integer $M$ one has
\begin{align*}
\sum _{m\in\mathbb{Z}^{d}\setminus\{  0\}}\vert m\vert ^{-2\delta}&
(1+N\Vert m\cdot\alpha\Vert)^{-2\vartheta}\\
&\leq cN^{-2\vartheta}\sum _{k=0}^{M-1}2^{-2\delta k}\left(\sum _{2^{k}\leq\vert m\vert <2^{k+1}}\Vert m\cdot\alpha\Vert^{-2\vartheta}\right)  +\sum_{\vert m\vert \geq2^{M}}\vert m\vert^{-2\delta}\\
&\leq\begin{cases}
cN^{-2\vartheta}\sum\limits_{k=0}^{M-1}2^{(2\vartheta\sigma+d-2d\vartheta-2\delta)k}+2^{(d-2\delta)M}  & \text{if }0<\vartheta<1/2,\\
cN^{-1}\sum\limits_{k=0}^{M-1}(1+k)2^{(\sigma-2\delta)k}+2^{(d-2\delta)M}  & \text{if }\vartheta=1/2,\\
cN^{-2\vartheta}\sum\limits_{k=0}^{M-1}2^{(2\vartheta\sigma-2\delta)k}+2^{(d-2\delta)M}  & \text{if }1/2<\vartheta<+\infty.
\end{cases}
\end{align*}

For $\vartheta<1/2$, the choice $M=\log_{2}(N)/(\sigma-d)$ if $\delta<\vartheta\sigma+d/2-d\vartheta$ or $M=2\vartheta\log_{2}(N)/(2\delta-d)$ if $\delta\geq\vartheta\sigma+d/2-d\vartheta$ gives
\begin{align*}
N^{-2\vartheta}\sum _{k=0}^{M-1}2^{(2\vartheta\sigma+d-2d\vartheta-2\delta)k}+2^{(d-2\delta)M}
% &\leq\begin{cases}
% cN^{-2\vartheta}2^{(2\vartheta\sigma+d-2d\vartheta-2\delta)M}+2^{(d-2\delta)M} & \text{if }\delta<\vartheta\sigma+d/2-d\vartheta,\\
% cN^{-2\vartheta}M+2^{(d-2\delta)M} & \text{if }\delta=\vartheta\sigma+d/2-d\vartheta,\\
% cN^{-2\vartheta}+2^{(d-2\delta)M} & \text{if }\delta>\vartheta\sigma+d/2-d\vartheta,
% \end{cases}\\
\leq\begin{cases}
cN^{-\frac{2\delta-d}{\sigma-d}} & \text{if }\delta<\vartheta\sigma+d/2-d\vartheta,\\
cN^{-2\vartheta}\log(N)  & \text{if }\delta=\vartheta\sigma+d/2-d\vartheta,\\
cN^{-2\vartheta} & \text{if }\delta>\vartheta\sigma+d/2-d\vartheta.
\end{cases}
\end{align*}

Observe that if $\sigma=d$, then $\delta>\vartheta\sigma+d/2-d\vartheta$. If $\vartheta=1/2$, the choice $M=\log_{2}(N)/(\sigma-d)$ if $\delta\leq\sigma/2$ or $M=\log_{2}(N)/(2\delta-d)$ if $\delta\geq\sigma/2$ gives 
\begin{align*}
N^{-1}\sum _{k=0}^{M-1}(1+k)2^{(\sigma-2\delta)k}+2^{(d-2\delta)M}
% \leq\begin{cases}
% cN^{-1}M2^{(\sigma-2\delta)M}+2^{(d-2\delta)M} & \text{if }\delta<\sigma/2,\\
% cN^{-1}M^{2}+2^{(d-2\delta)M} & \text{if }\delta=\sigma/2,\\
% cN^{-1}+2^{(d-2\delta)M} & \text{if }\delta>\sigma/2,
% \end{cases}\\
\leq\begin{cases}
cN^{-\frac{2\delta-d}{\sigma-d}}\log(N)& \text{if }\delta<\sigma/2,\\
cN^{-1}\log^{2}(N)  & \text{if }\delta=\sigma/2,\\
cN^{-1} & \text{if }\delta>\sigma/2.
\end{cases}
\end{align*}

Observe that if $\sigma=d$, then $\delta>\sigma/2$. For the last case $\vartheta>1/2$, the choice $M=2\vartheta\log_{2}(N)/(2\vartheta\sigma-d)$ if $\delta\leq\sigma\vartheta$ or $M=2\vartheta\log_{2}(N)/(2\delta-d)$ if $\delta\geq\vartheta\sigma$ gives
\begin{align*}
N^{-2\vartheta}\sum _{k=0}^{M-1}2^{(2\vartheta\sigma-2\delta)k}+2^{(d-2\delta)M}
% &\leq\begin{cases}
% cN^{-2\vartheta}2^{(2\vartheta\sigma-2\delta)M}+2^{(d-2\delta)M} & \text{if }\delta<\vartheta\sigma,\\
% cN^{-2\vartheta}M+2^{(d-2\delta)M} & \text{if }\delta=\vartheta\sigma,\\
% cN^{-2\vartheta}+2^{(d-2\delta)M} & \text{if }\delta>\vartheta\sigma,
% \end{cases}\\
&\leq\begin{cases}
cN^{-\frac{2\vartheta(2\delta-d)}{2\vartheta\sigma-d}}& \text{if }\delta<\vartheta\sigma,\\
cN^{-2\vartheta}\log(N)  & \text{if }\delta=\vartheta\sigma,\\
cN^{-2\vartheta} & \text{if }\delta>\vartheta\sigma.
\end{cases}
\end{align*}

Collecting the above estimates one obtains that
\[
\left( \sum _{m\in\mathbb{Z}^{d}\setminus\{  0\}}\vert m\vert^{-2\delta}(N\Vert m\cdot\alpha\Vert)^{-2\vartheta}\right)^{\frac{1}{2}} \leq cN^{-\vartheta}X(d,\delta,\vartheta,\sigma,N).
\]
\end{proof}

\begin{proof}[Proof Theorem \ref{T-3}]
Recall that the Fourier coefficients are bounded by the $L^{1}(\mathbb{T}^d)$ norm of the function, so that 
\begin{align*}
\sup_{x\in \mathbb{T}^d}
\left\{ \left|\mathcal D^{\Phi, \alpha}_N f(x) \right|\right\} &\geq\int _{\mathbb{T}^{d}} \left| \mathcal D^{\Phi,\alpha}_Nf(x) \right| dx \\
% & =\int _{\mathbb{T}^{d}}\left\vert \sum _{m\in\mathbb{Z}^{d}\setminus\{  0\}}
% \left( \sum _{n=-\infty}^{+\infty}\Phi(N,n)  e^{  2\pi inm\cdot\alpha}  \right) \widehat{f}(m)  e^{2\pi im\cdot x}  \right\vert dx\\
&\geq \left\vert \sum _{n=-\infty}^{+\infty}\Phi(N,n)  e^{2\pi inm\cdot\alpha}\right\vert \vert \widehat{f}(m) 
\vert  .
\end{align*}

Therefore,
\begin{align*}
&\limsup_{N\to+\infty}
\left\{ N^{\vartheta}\sup_{x\in \mathbb{T}^d}
\left\{ \left| \mathcal D^{\Phi, \alpha}_N f(x) \right| \right\} \right\} \geq \limsup_{N\to+\infty}\left\{N^{\vartheta}\int _{\mathbb{T}^{d}}\left| \mathcal D^{\Phi,\alpha}_N f(x)\right| dx\right\}\\
&\geq \limsup_{N\to +\infty}\left\{N^\vartheta \left\vert \sum _{n=-\infty}^{+\infty}\Phi(N,n)  e^{2\pi inm\cdot\alpha}\right\vert  \right\} | \widehat{f}(m)|
=C(m\cdot \alpha) \vert \widehat{f}(m)\vert.
\end{align*}
\end{proof}
The proof of Theorem \ref{T-4} is straightforward. We include a proof for the sake of completeness.
\begin{proof}[Proof of Theorem \ref{T-4}]
It suffices to recall that the Sobolev space $W^{\frac{d}{2},2}(\mathbb{T}^d)$ contains unbounded functions. If $f(x)$ is unbounded in just one point and if $\alpha$ is an irrational vector, or if the weights $\Phi(N,n)$ are non-negative and $\alpha$ is arbitrary, then in the sum $\sum_{n=-\infty}^{+\infty}\Phi(N,n)  f(x+n\alpha)$ the possible infinite terms do not cancel. Hence \[\sup_{x\in\mathbb{T}^{d}}\left\{\left|\mathcal D^{\Phi, \alpha}_N f(x) \right|\right\} =+ \infty.\]

An explicit example of function in $W^{\frac{d}{2},2}(\mathbb{T}^d)$ unbounded in a neighborhood of the origin and bounded elsewhere is given by the series 
\[
f(x) = \sum_{m\in\mathbb Z^d} \left(1+|m|^2 \right)^{-\frac{d}{2}}\log^{-1} \left(2+|m| \right) e^{2\pi im\cdot x}.
\]
\end{proof}

The following lemma is a main ingredient in the proof of Theorem \ref{T-5}.

\begin{lemma}\label{L-3}
\begin{itemize} \item[$(i)$] If $\vartheta >0$, then, for every $\alpha\in\mathbb R^d$,
one has
\[
\sum_{m\in\mathbb Z^d\backslash\{0\}}|m|^{-d\vartheta}
\|m\cdot\alpha \|^{-\vartheta} =+\infty.
\]
\item[$(ii)$] If $\vartheta >0$, then, for almost every $\alpha\in\mathbb R^d$, one has
\[
\sum_{m\in\mathbb Z^d\backslash\{0\}}|m|^{-d\vartheta} \log^{-\vartheta}\left(1+|m|\right)
\|m\cdot\alpha \|^{-\vartheta} =+\infty.
\]
\end{itemize}
\end{lemma}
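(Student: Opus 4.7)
The plan for part (i) is to dichotomize on whether $\alpha$ is irrational. If $1,\alpha_1,\ldots,\alpha_d$ are linearly dependent over $\mathbb Q$, then there exists some $m_0\in\mathbb Z^d\setminus\{0\}$ with $m_0\cdot\alpha\in\mathbb Z$, so $\|m_0\cdot\alpha\|=0$ and the term at $m_0$ is already $+\infty$. Otherwise, I would invoke Dirichlet's theorem $(\star)$: for every positive integer $M$ there is $m=m(M)\in\mathbb Z^d\setminus\{0\}$ with $|m(M)|\leq \sqrt d\, M$ and $\|m(M)\cdot\alpha\|\leq M^{-d}$. Irrationality of $\alpha$ forces $\|m_0\cdot\alpha\|>0$ for every fixed nonzero $m_0$, so the family $\{m(M)\}_{M\geq 1}$ cannot be bounded and must take infinitely many distinct values as $M\to+\infty$. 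For each such $m$ the corresponding summand satisfies $|m|^{-d\vartheta}\|m\cdot\alpha\|^{-\vartheta}\geq (\sqrt d\, M)^{-d\vartheta} M^{d\vartheta}=d^{-d\vartheta/2}$, a positive constant, so an infinite sum of these constants diverges.

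For part (ii), the plan is to appeal to the divergence half of the Khintchine-Groshev theorem for linear forms, the natural complement of the result $(\star\star\star)$ already recalled in the introduction: if $\psi:\mathbb N\to\mathbb R_+$ is non-increasing and $\sum_{n=1}^{+\infty} n^{d-1}\psi(n)=+\infty$, then for almost every $\alpha\in\mathbb R^d$ the inequality $\|m\cdot\alpha\|\leq \psi(|m|)$ has infinitely many solutions $m\in\mathbb Z^d\setminus\{0\}$. Applied with the choice $\psi(n)=n^{-d}\log^{-1}(1+n)$, the divergence hypothesis becomes $\sum_{n=1}^{+\infty} 1/(n\log(1+n))=+\infty$, which holds by the integral test. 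Consequently, for almost every $\alpha$ there are infinitely many $m\in\mathbb Z^d\setminus\{0\}$ with $\|m\cdot\alpha\|\leq |m|^{-d}\log^{-1}(1+|m|)$, and for each such $m$ the summand satisfies
\[
|m|^{-d\vartheta}\log^{-\vartheta}(1+|m|)\,\|m\cdot\alpha\|^{-\vartheta}\geq |m|^{-d\vartheta}\log^{-\vartheta}(1+|m|)\cdot |m|^{d\vartheta}\log^{\vartheta}(1+|m|)=1,
\]
so the sum has infinitely many terms at least $1$ and is therefore $+\infty$.

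The substantive step is part (ii), since Dirichlet's deterministic estimate alone does not quite beat the extra logarithmic weight. The main obstacle is therefore the reliance on the divergence side of Khintchine-Groshev, a genuinely metric statement established via Borel-Cantelli together with a quasi-independence estimate for the measures of the resonance sets $\{\alpha\in\mathbb T^d:\|m\cdot\alpha\|\leq \psi(|m|)\}$. Once that tool is in hand, the remaining work is the short algebraic verification displayed above that the Khintchine approximation rate exactly cancels the $|m|^{-d\vartheta}\log^{-\vartheta}(1+|m|)$ weight.
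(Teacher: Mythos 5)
Your proposal is correct and follows essentially the same route as the paper: part $(i)$ via Dirichlet's theorem producing infinitely many terms bounded below by $d^{-d\vartheta/2}$, and part $(ii)$ via the divergence half of the Khintchine--Groshev theorem with $\psi(n)=n^{-d}\log^{-1}(1+n)$, whose terms cancel the weight exactly. Your explicit dichotomy on rational versus irrational $\alpha$ in part $(i)$ only makes precise a point the paper leaves implicit.
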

\begin{comment}
\vecchio{If $\vartheta >0$, then for every $\alpha$ one has
\[
\sum_{m\in\mathbb Z^d\backslash\{0\}}|m|^{-d\vartheta}
\|m\cdot\alpha \|^{-\vartheta} =+\infty
\]}

\end{comment}

\begin{proof} $(i)$ It is a classical result of Dirichlet in Diophantine approximation that for every vector $\alpha\in\mathbb{R}^d$ and every positive integer $M$ there exists $m=\left( m_{1},m_{2},\dots,m_{d} \right)$ in $\mathbb{Z}^{d}$ with $\left| m_{j}\right| \leq M$ for every $j=1,\ldots,d$, and with $\| \alpha\cdot m \| \leq M^{-d}$. 
See e.g. \cite[Chapter II, Theorem 1E]{Schmidt}.
Since $|m|\leq \sqrt{d}M$, it follows that in the series we are interested in there are infinitely many terms larger than $d^{\frac{d\vartheta}{2}}$, and the series diverges.

$(ii)$ It is a classical result of Khintchine in dimension one, and of Groshev in dimension $d\geq 1$, that for almost every vector $\alpha\in\mathbb{R}^d$ there exists infinitely many $m \in\mathbb{Z}^d \setminus \{ 0 \}$ such that 
$ \|m\cdot\alpha \| \leq |m|^{-d} \log^{-1} \left( 1+|m| \right) $.
See e.g. \cite[ Chapter III, Theorem 3A]{Schmidt} and \cite{BeresnevichVelani, Sprind}.
Hence, for almost every $\alpha$ there are infinitely many terms larger than $1$ in the given series, so that such series diverges.
\end{proof}
\begin{comment}
\vecchio{It is a classical result of Dirichlet in Diophantine approximation that for every vector $\alpha\in\mathbb{R}^d$ and every positive integer $M$ there exists $m=\left( m_{1},m_{2},\dots,m_{d} \right)$ in $\mathbb{Z}^{d}$ with $\left| m_{j}\right| \leq M$ for every $j=1,\ldots,d$, and with $| \alpha\cdot m | \leq M^{-d}$. 
See e.g. \cite[Chapter II, Theorem 1E]{Schmidt}.
Hence in the series of the lemma there are infinitely many terms larger than $c>0$.}
\end{comment}

\begin{proof}[Proof of Theorem \ref{T-5}]
Observe that if $\vartheta \leq 1$ then $d\vartheta-d/2 \leq d/2$, and this case is already covered by Theorem \ref{T-4}. 
In order to prove $(i)$, define
\[
f(x)=\sum_{m\in\mathbb Z^d\backslash\{0\}}|m|^{-d\vartheta} e^{2\pi im\cdot x}.
\]

The norm of this function in the Sobolev space 
$W^{\delta,2}(\mathbb{T}^{d})$ is
\[
\|f\|_{\delta, 2}= \left(\sum_{m\in\mathbb{Z}^{d}\setminus\{0\}}
\left(1+\vert m\vert ^{2}\right)^{\delta}
\left| m \right|^{-2 d\vartheta} \right)^{\frac{1}{2}}
%\approx \left(\sum_{m\in\mathbb{Z}^{d}-\{0\}}
%\left| m \right|^{2\delta-2\vartheta} \right)^{1/2}.
\]

Hence, this function is in $W^{\delta,2}(\mathbb{T}^{d})$ if and only if $2\delta-2d\vartheta < -d$, that is, if and only if $\delta < d\vartheta - d/2$. 

\medskip

Let us estimate $\mathcal D^{\Phi, \alpha}_Nf(x)$ with $x=0$,
  
\begin{align*}
\mathcal D^{\Phi, \alpha}_Nf(0)
&=\sum _{m\in\mathbb{Z}^{d}\setminus\{  0\}} \left(  
\sum _{n=-\infty}^{+\infty} \Phi(N,n) e^{2\pi i m\cdot \alpha} \right)|m|^{-d\vartheta}
\\
& \geq H\sum _{m\in\mathbb{Z}^{d}\setminus\{  0\}}
|m|^{-d\vartheta} \left(1+N \|m\cdot\alpha \| \right)^{-\vartheta} \\&\geq H 2^{-\vartheta}N^{-\vartheta}\sum _{\|m\cdot\alpha \| > \frac{1}{N}}
|m|^{-d\vartheta} \|m\cdot\alpha \|^{-\vartheta} .
\end{align*}
Then, by part $(i)$ of Lemma \ref{L-3} , it follows that, for every $\alpha$, one has
\begin{align*}
&\limsup_{N\to+\infty} \left\{N^{\vartheta}
\sup_{x\in\mathbb{T}^{d}}\left\{\left|\mathcal D^{\Phi, \alpha}_N f(x) \right|\right\} \right\}  \geq H \limsup_{N\to+\infty} \left\{ 
\sum _{\|m\cdot\alpha \| > \frac{1}{N}}
|m|^{-d\vartheta} \|m\cdot\alpha \|^{-\vartheta} \right\}
=+ \infty.
\end{align*}

The proof of $(ii)$ is similar. Define 
\[
f(x)=\sum_{m\in\mathbb Z^d\backslash\{0\}}|m|^{-d\vartheta} \log^{-\vartheta} \left( 1+|m| \right) e^{2\pi im\cdot x}.
\]
If $\vartheta >1/2$ this function is in the Sobolev space 
$W^{d\vartheta -\frac{d}{2},2}(\mathbb{T}^{d})$, and 
\begin{align*}
\mathcal D^{\Phi, \alpha}_Nf(0)
&=\sum _{m\in\mathbb{Z}^{d}\setminus\{  0\}} \left(  
\sum _{n=-\infty}^{+\infty} \Phi(N,n) e^{2\pi i m\cdot \alpha} \right)|m|^{-d\vartheta} \log^{-1} \left( 1+|m| \right)
\\
& \geq H\sum _{m\in\mathbb{Z}^{d}\setminus\{  0\}}
 \left(1+N \|m\cdot\alpha \| \right)^{-\vartheta} |m|^{-d\vartheta} \log^{-1} \left( 1+|m| \right)\\
& \geq H 2^{-\vartheta} N^{-\vartheta}\sum _{\|m\cdot\alpha \| > 1/N}
  |m|^{-d\vartheta} \log^{-1} \left( 1+|m| \right) \|m\cdot\alpha \|^{-\vartheta} .
\end{align*}
Then, by part $(ii)$ of Lemma \ref{L-3}, it follows that, for almost every $\alpha$, one has 
\begin{align*}
&\limsup_{N\to+\infty} \left\{N^{\vartheta}
\sup_{x\in\mathbb{T}^{d}}\left\{\left|\mathcal D^{\Phi, \alpha}_N f(x) \right|\right\} \right\} \\
& \geq H 2^{-\vartheta} \limsup_{N\to+\infty} \left\{ 
\sum _{\|m\cdot\alpha \| > \frac{1}{N}}
|m|^{-d\vartheta} \log^{-1} ( 1+|m|)
\|m\cdot\alpha \|^{-\vartheta} \right\}
=+ \infty.
\end{align*}
\end{proof}
At last, we prove Theorem \ref{T-6}.
\begin{proof}[Proof of Theorem \ref{T-6}]
Let $A$ be a subset of $\Omega$ of cardinality $|A|<+\infty$, and let 
\[ f(x)= \sum_{m \in A}\left(1+|m|^2\right)^{-\frac{\delta}{2}}e^{2\pi im\cdot x}. \]

Then $\|f\|_{\delta,2}=|A|^{\frac{1}{2}}$. Moreover, for the $N$'s in the theorem and under the assumption that $\Vert \alpha\cdot m\Vert \leq L\vert m\vert^{-\sigma}$ for every $m \in A$, 
\begin{align*}
\mathcal{D}_N^{\Phi,\alpha} f(0) 
&= \sum_{m \in A} \left(1+|m|^2\right)^{-\frac{\delta}{2}}\left( \sum _{n=-\infty}^{+\infty}\Phi(N,n)e^{2\pi inm\cdot\alpha}\right) \\
& \geq H \sum_{m \in A} \left(1+|m|^2\right)^{-\frac{\delta}{2}} 
\left(1+N\|m\cdot\alpha\|\right)^{-\vartheta}\\
& \geq H \sum_{m \in A} \left(1+|m|^2\right)^{-\frac{\delta}{2}} 
\left(1+NL|m|^{-\sigma}\right)^{-\vartheta}\\
& \geq 2^{-\vartheta-\frac{\delta}{2}}H L^{-\vartheta }N^{-\vartheta}\sum_{m \in A,\, |m|\leq (NL)^{\frac{1}{\sigma}}} |m|^{-\delta - \vartheta \sigma}.
\end{align*}

Hence, if $\delta \leq \vartheta \sigma$ one has 
\begin{align*}
\limsup_{N\to+\infty} \left\{N^{\vartheta}
\sup_{x\in\mathbb{T}^{d}}\left\{\left|\mathcal D^{\Phi, \alpha}_N f(x) \right|\right\} \right\} 
& \geq 2^{-\vartheta-\frac{\delta}{2}}H L^{-\vartheta }\sum_{m \in A} |m|^{-\delta - \vartheta \sigma} \\
& \geq  2^{-\vartheta-\frac{\delta}{2}}H L^{-\vartheta } |A| 
= 2^{-\vartheta-\frac{\delta}{2}}H L^{-\vartheta } |A|^{\frac{1}{2}}\|f\|_{\delta,2} .
\end{align*}

Letting $|A| \to+\infty $, it follows that the family of operators $ \{ D^{\Phi, \alpha}_N \}_{N=1}^{+\infty} $ is not uniformly bounded from $W^{\vartheta \sigma, 2}(\mathbb T^2)$ into $L^\infty(\mathbb T^d)$.
Therefore, by the resonance theorem of Banach and Steinhaus, there exists a function $f\in W^{\vartheta \sigma, 2}(\mathbb T^d)$ such that 
\[
\limsup_{N\to+\infty}\left\{N^{\vartheta}\sup_{x\in\mathbb T^d}\left\{\left| \mathcal D^{\Phi,\alpha}f(x) \right|\right\}\right\}=+\infty.
\]
\end{proof}
\begin{comment}
\vecchio{
If  $f(x)=\left(1+|m|^2\right)^{-\delta/2}e^{2\pi im\cdot x}$, then $\|f\|_{\delta,2}=1$. One has
\begin{align*}
\left|\mathcal{D}_N^{\Phi,\alpha} f(x) \right|
&= \left(1+|m|^2\right)^{-\delta/2}\left\vert \sum _{n=-\infty}^{+\infty}\Phi(N,n)e^{2\pi inm\cdot\alpha}\right\vert \\
& \geq c\left(1+|m|^2\right)^{-\delta/2} 
\left(1+N\|m\cdot\alpha\|\right)^{-\vartheta}.
\end{align*}
Hence, denoting by $\left\|\mathcal D_{N}^{\Phi,\alpha}\right\|$ the operator norm of the operator from $W^{\delta, 2}(\mathbb T^2)$ into $L^\infty(\mathbb T^d)$, for those $m$'s satisfying the assumption of the theorem one has
\begin{align*}
\limsup_{N\to+\infty}\left\{N^{\vartheta}\left\|\mathcal D_{N}^{\Phi,\alpha}\right\|\right\}&\geq c\left(1+|m|^2\right)^{-\frac \delta 2} \limsup_{N\to+\infty} \left\{N^{\vartheta}(1+N\|m\cdot\alpha\|)^{-\vartheta}\right\}\\
&\geq c|m|^{-\delta }\|m\cdot\alpha\|^{-\vartheta}\geq c|m|^{\sigma\vartheta-\delta}.
\end{align*}
If this estimates holds for infinitely many $m$'s  and if $\delta<\sigma\vartheta$, one has that 
\[
\limsup_{N\to+\infty}\left\{N^{\vartheta}\left\| \mathcal D_{N}^{\Phi,\alpha}\right\| \right\}=+\infty.
\]
Therefore, by the resonance theorem of Banach and Steinhaus, there exists a function $f\in W^{\delta, 2}(\mathbb T^d)$ such that 
\[
\limsup_{N\to+\infty}\left\{N^{\vartheta}\sup_{x\in\mathbb T^d}\left| \mathcal D^{\Phi,\alpha}f(x) \right|\right\}=+\infty.
\]}
\end{comment}

We conclude the section proving the corollaries.
\begin{proof}[Proof of Corollary \ref{C-1}]
Recall that, as observed in the introduction, if the theorems apply with an exponent $\vartheta_0$, then they also apply with every $\vartheta_1 \leq \vartheta_0$.  
The choice of $\Psi(t)=\chi_{\{-1\leq t\leq 1\}}(t)$ gives
\begin{align*}
\frac {1}{2N+1}\sum_{n=-N}^N e^{2\pi int} =\frac{\sin\big((2N+1)\pi t\big)}{(2N+1)\sin(\pi t)}.
\end{align*}

Up to a factor $1/(2N+1)$ one  recognizes the Dirichlet kernel and easily verifies that
\[
\left\vert\frac{\sin\big((2N+1)\pi t\big)}{(2N+1)\sin(\pi t)}\right\vert\leq \frac{K}{1+N\|t\|}.
\]

Hence Theorem \ref{T-1} and Theorem \ref{T-2} apply with $\vartheta=1$.
In order to prove that the speed of convergence $cN^{-1}$ cannot be accelerated one can apply Theorem \ref{T-3}. However, there is also a more elementary and general argument that applies to every nonconstant function $f(x)$. Assume that there exists a pair $\{N,N+1\}$ such that
\begin{align*}
\int_{\mathbb{T}^{d}}\left| \frac{1}{N}\sum_{n=1}^{N}f(x+n\alpha)  -\int_{\mathbb{T}^{d}}f(y)dy\right| dx&<\frac{1}{2N}
\int_{\mathbb{T}^{d}}\left| f(x)  -\int_{\mathbb{T}^{d}}f(y)dy\right| dx,\\
\int_{\mathbb{T}^{d}}\left| \frac{1}{N+1}\sum_{n=1}^{N+1}f(x+n\alpha)  -\int_{\mathbb{T}^{d}}f(y)dy\right| dx&<\frac{1}{2(
N+1)  }\int_{\mathbb{T}^{d}}\left| f(x)  -\int_{\mathbb{T}^{d}}f(y)dy\right| dx.
\end{align*}

Then the triangle inequality gives a contradiction,
\begin{align*}
&\int_{\mathbb{T}^{d}}\left| f(x)  -\int_{\mathbb{T}^{d}}f(y)dy\right| dx=\int_{\mathbb{T}^{d}}\left| f(x+(N+1)\alpha)  -\int_{\mathbb{T}^{d}}f(y)dy\right| dx \geq \\
&\int_{\mathbb{T}^{d}}\left(  
\left| \sum_{n=1}^{N}f(x+n\alpha)  -N\int_{\mathbb{T}^{d}}f(y)dy \right| +
\left| \sum_{n=1}^{N+1}f(x+n\alpha)  -(N+1)  \int_{\mathbb{T}^{d}}f(y)dy \right| \right)  dx\\
&>\int_{\mathbb{T}^{d}}\left| f(x)  -\int_{\mathbb{T}^{d}}f(y)dy\right| dx.
\end{align*}
\end{proof}

\begin{proof}[Proof of Corollary \ref{C-2}] 
In this case we have
\begin{align*}
\frac {1}{N}\sum_{n=1-N}^{N-1}\left(1-\frac{|n|}{N}\right) e^{2\pi int} =\frac{1}{N}\left(\frac{\sin(\pi N t)}{\sqrt{N}\sin(\pi t)}\right)^2.
\end{align*}

Up to a factor $1/N$ one  recognizes the Fej\'er kernel, and checks  that Theorem \ref{T-1} and Theorem \ref{T-2} apply with $\vartheta=2$.
To prove that the speed of convergence $cN^{-2}$ cannot be accelerated observe that
\[
C(t)=\limsup_{N\to +\infty} \left\{ N^{2}\left| \frac{1}{N}\left(\frac{\sin(\pi N t)}{\sqrt{N}\sin(\pi t)}\right)^2
\right| \right\}= \frac{\limsup\limits_{N\to+\infty} \left\{\sin^2(\pi N t)\right\}}{\sin^2(\pi t)}.
\]

If $t=0$, then $C(0)=+\infty$. If $t\neq0$ is rational, then $\sin^2(\pi Nt)$ takes a finite number of values for $N\to +\infty$, hence $\limsup\limits_{N\to+\infty}\{\sin^2(\pi N t)\}>0$. If $t$ is irrational then,
by Kronecker's theorem, $\limsup\limits_{N\to+\infty}\left\{\sin^2(\pi N t)\right\}=1$ . The conclusion follows from Theorem \ref{T-3}.
\end{proof}

\begin{proof}[Proof of Corollary \ref{C-3}]The function $\Psi(t)$ is related to the Bochner--Riesz kernel.
Recall the  integral representation of Bessel functions,
\[\int _{-N}^{+N}\left(  1-\dfrac{\vert t\vert ^{2}}{N^{2}}\right)^{\gamma}e^{  2\pi ist}  dt=\pi^{-\gamma}\Gamma(\gamma+1)N^{-\gamma+\frac{1}{2}}\vert s\vert ^{-\gamma-\frac{1}{2}}J_{\gamma+\frac{1}{2}}(2\pi N\vert s\vert ) .
\]

The Poisson summation formula gives the series expansion 
\[
\sum_{n=1-N}^{N-1}\left(  1-\dfrac{\vert n\vert^{2}}{N^{2}} \right)^{\gamma}e^{2\pi int}=\pi^{-\gamma}\Gamma(\gamma+1)N^{-\gamma+\frac{1}{2}}\sum_{k=-\infty}^{+\infty}\vert t+k\vert ^{-\gamma-\frac{1}{2}}J_{\gamma+\frac{1}{2}}(2\pi N\vert t+k\vert)  .
\]

See \cite[Chapter 4, Theorem 4.15 and Chapter 7, Theorem 2.4]{SW}.
Observe that the use of the Poisson summation formula is legitimate since both above series are absolutely and uniformly convergent (see \cite[Lemmas 4 and 5]{Stein}). Also recall that the Bessel function $J_{\gamma+\frac{1}{2}}(z)$ has the asymptotic expansions 
\[
J_{\gamma+\frac{1}{2}}(z) =
\begin{cases}
\dfrac{z^{\alpha}}{2^{\alpha}\Gamma(\alpha +1)} +O\left(z^{\alpha +1}\right) & \textrm{if }z\to 0^+,\\
\sqrt{\dfrac{2}{\pi z}}\sin\left(z-\pi \gamma/2\right)+O(z^{-\frac{3}{2}}) & \textrm{if } z\to+\infty.
\end{cases}
\]

Assume for simplicity that $0<t< 1/2$. Then the above sum has a main term of the form 
\[
N^{-\gamma+\frac{1}{2}}\|t\|^{-\gamma-\frac{1}{2}}J_{\gamma+\frac{1}{2}}(2\pi N\|t\|).
\]

The remainder is the sum over all $k$'s with $|t+k|\geq 1/2$ and it can be estimated as
\begin{align*}
\bigg| N^{-\gamma+\frac{1}{2}}\!\!\!\sum_{|t+k|\geq \frac{1}{2}} |t+k|^{-\gamma-\frac{1}{2}}J_{\gamma+\frac{1}{2}}(2\pi N|t+k|)\bigg|&\leq cN^{-\gamma+\frac{1}{2}}\!\!\!\sum_{|t+k|\geq \frac{1}{2}} |t+k|^{-\gamma-\frac{1}{2}}(N|t+k|)^{- \frac{1}{2}}\\&\leq cN^{-\gamma}.
\end{align*}

The main term can be estimated from above by
\begin{align*}
N^{-\gamma+\frac{1}{2}}\|t\|^{-\gamma-\frac{1}{2}}J_{\gamma+\frac{1}{2}}(2\pi N\|t\|)
&\leq
\begin{cases}
cN^{-\gamma+\frac{1}{2}}\|t\|^{-\gamma-\frac{1}{2}}\left(N\|t\|\right)^{\gamma+\frac{1}{2}} & \textrm{if } N\|t\|\leq1,\\
cN^{-\gamma+\frac{1}{2}}\|t\|^{-\gamma-\frac{1}{2}}\left(N\|t\|\right)^{-\frac{1}{2}} & \textrm{if } N\|t\|>1,
\end{cases}
\\
&\leq
\begin{cases}
cN& \textrm{if } N\|t\|\leq1,\\
cN^{-\gamma}\|t\|^{-\gamma-1}& \textrm{if } N\|t\|>1.
\end{cases}
\end{align*}

Observe that the estimates of the main term dominate the remainder. Also notice that \[
cN\leq\sum_{\vert n\vert \leq \frac{N}{2}}\left(  1-\dfrac{1}{4}\right)^{\gamma}\leq\sum_{n=1-N}^{N-1}\left(  1-\dfrac{\vert n\vert^{2}}{N^{2}}\right)^{\gamma}\leq\sum_{\vert n\vert \leq N}1\leq CN.
\]

In conclusion,
\[
\left| \left( \sum_{n=1-N}^{N-1}\left( 1-\dfrac{|n|^2}{N^2}\right)^{\gamma}\right)^{-1}\sum_{n=1-N}^{N-1}\left(1-\dfrac{\vert n\vert ^{2}}{N^{2}}\right)^{\gamma}e^{2\pi int}\right| \leq c(1+N\|t\|)^{-\gamma-1}.
\]

Hence, Theorem \ref{T-1} and Theorem \ref{T-2} apply with $\vartheta=\gamma+1$. To apply Theorem \ref{T-3} let us show that there exist $\varepsilon>0$ and $\eta>0$ such that for every $t$ in a set of measure $\eta$ one has $C(t)>\varepsilon$.  Observe that
\[
C(t) \approx \limsup_{N\to+\infty}\left\{N^{\frac{1}{2}}\left|\sum_{k=-\infty}^{+\infty}\vert t+k\vert ^{-\gamma-\frac{1}{2}}J_{\gamma+\frac{1}{2}}(2\pi N\vert t+k\vert)\right|\right\}.
\]

Again assume that $t+k$ is not an integer for every $k$. The asymptotic expansion of Bessel functions gives 
\begin{align*}
&N^{\frac{1}{2}}\left|\sum_{k=-\infty}^{+\infty}\vert t+k\vert ^{-\gamma-\frac{1}{2}}J_{\gamma+\frac{1}{2}}(2\pi N\vert t+k\vert)\right|\\
&\geq N^{\frac{1}{2}}\|t\|^{-\gamma-\frac{1}{2}}\left| J_{\gamma+\frac{1}{2}}(2\pi N\|t\|) \right| -N^{\frac{1}{2}}\sum_{|t+k|\geq \frac{1}{2}} |t+k|^{-\gamma-\frac{1}{2}}\left|J_{\gamma+\frac{1}{2}}(2\pi N|t+k|) \right|\\
&\geq\pi^{-1} \|t\|^{-\gamma-1}\left|\sin\left(2\pi N\|t\|-\frac{\pi\gamma }{2}\right)\right|- cN^{\frac{1}{2}}\|t\|^{-\gamma-\frac{1}{2}}(N\|t\|)^{-\frac{3}{2}}\\
&\qquad-cN^{\frac{1}{2}}\sum_{|t+k|\geq \frac{1}{2}} |t+k|^{-\gamma-\frac{1}{2}}(N|t+k|)^{-\frac{1}{2}}\\
&\geq \pi^{-1} \|t\|^{-\gamma-1}\left|\sin\left(2\pi N\|t\|-\frac{\pi\gamma }{2} \right)\right|-cN^{-1}\|t\|^{-\gamma-2}-c\\
&= \pi^{-1}\|t\|^{-\gamma-1}\left(\left|\sin\left(2\pi N\|t\|-\frac{\pi\gamma }{2}\right)\right|-cN^{-1}\|t\|^{-1}-c\|t\|^{\gamma+1}\right).
\end{align*}

Let $0<\lambda<1/2$. Then for every $t$ such that $\lambda/2\leq\|t \|\leq \lambda$
one has
\begin{align*}
&\left|\sin\left(2\pi N\|t\|- \frac{\pi\gamma }{2}\right)\right|-cN^{-1}\|t\|^{-1}-c\|t\|^{\gamma+1}\\&\geq 
\left|\sin\left(2\pi N\|t\|- \frac{\pi\gamma }{2}\right)\right| -2c\lambda^{-1}N^{-1}-c\lambda^{\gamma+1}.
\end{align*}

In conclusion, if $\lambda$ is suitably small, for every $N$ suitable large one has 
\[2c\lambda^{-1}N^{-1}+c\lambda^{\gamma+1}< \frac{1}{2}. \]

Moreover, if $t$ is irrational, by Kronecker's theorem, 
\[
\limsup_{N\to+\infty}\left\{ \left|\sin\left(2\pi N\|t\|-\pi\gamma /2\right)\right| \right\}=1.
\]

Hence $C(t) >c>0$ for every $t$.
\end{proof}

\begin{proof}[Proof of Corollary \ref{C-4}] 
One has 
\begin{align*}
\sum_{n=-N}^{N}
\dfrac{(2N)!}{2^{2N} \left( N-n \right) ! \left( N+n \right) !} e^{2\pi i nt}
&= \sum_{m=0}^{2N}
\dfrac{(2N)!}{m! \left( 2N-m \right) ! } 
\left( \dfrac{ e^{\pi i t}}{2}\right)^m
\left( \dfrac{ e^{-\pi i t}}{2}\right)^{2N-m}\\
 &= \left( \dfrac{e^{\pi i t}+e^{-\pi i t}}{2} \right)^{2N}
 =\cos^{2N}(\pi t).
\end{align*}
 
Up to a normalizing factor one recognizes the de la Vall\'ee Poussin kernel, which is similar to the heat kernel. Indeed, when $t\to 0$,  
\[
\cos^{2N}(\pi t)=e^{2N\log \left(\cos (\pi t)\right)}
=e^{2N\log \left(1-\frac{\pi^2 t^2}{2}+...\right)}
\approx e^{-\pi^2 Nt^2}.
\]

It follows that, for every $\vartheta >0$, there exists $c>0$ such that 
\[
\cos^{2N}(\pi t)\leq ce^{-\pi^2 Nt^2} 
\leq c ( 1+\sqrt{N} \|t\| )^{-2\vartheta}.
\]

Hence, Theorem \ref{T-1} and Theorem \ref{T-2} apply with $N$ replaced by $[\sqrt{N}]$.
\end{proof}

\begin{proof}[Proof of Corollary \ref{C-5}] 
The fact that the speed of convergence $cN^{-\frac{\delta}{d}}$ is the best possible in the Sobolev space $W^{\delta,2}(\mathbb{T}^{d})$ is proved in \cite{BCCGST}. The fact that one can actually obtain such speed of convergence, up to some small transgression, follows from Theorem \ref{T-2} with suitable combinations of $\delta$, $\vartheta$ and $\sigma$. 
Let $ \Phi(N,n) = \left(\sum _{n=-\infty}^{+\infty}\Psi(N^{-1}n)  \right)^{-1}\Psi(N^{-1}n)$, with  $\Psi(t)$ a  smooth compactly supported bounded function satisfying 
$cN\leq \left| \sum _{n=-\infty}^{+\infty}\Psi(N^{-1}n) \right| \leq CN$.
Then, for every $\vartheta$ there exists $c$ such that \[
\left| \sum_{n=-\infty}^{+\infty}\Phi(N,n)  e^{2\pi int} \right| \leq c(  1+N\Vert t\Vert)^{-\vartheta}
\]
for every positive integer $N$.
To see this set  
\[
\Lambda(s)  =\int _{-\infty}^{+\infty}\Psi(t)  e^{ 2\pi ist}  dt.
\]

Since $\Psi(t)$ is smooth with compact support, by iterated integration by parts, one has
\[
\left|\int _{-\infty}^{+\infty} \Psi(t)  e^{2\pi ist} dt\right| \leq (2\pi\vert s\vert)^{-j}\int _{-\infty}^{+\infty}
\left| \dfrac{d^{j}}{dt^{j}}\Psi(t)  \right| dt.
\]

Hence, for every $\vartheta$ there exists $c$ such that
\[
\vert \Lambda(s)\vert \leq c(  1+\vert s\vert)  ^{-\vartheta}.
\]

By the Poisson summation formula,
\[
\sum_{n=-\infty}^{+\infty}\Psi(N^{-1}n)  e^{2\pi int} 
=  N\sum_{n=-\infty}^{+\infty} \Lambda(N(n+t)) .
\]

It follows that, for every $t\neq0$,
\begin{align*}
\left| \sum_{n=-\infty}^{+\infty}\Psi(N^{-1}n)  e^{2\pi int}  \right| &\leq N\sum_{n=-\infty}^{+\infty}\vert \Lambda(N(n+t)) \vert \\
&\leq cN(1+N\Vert t\Vert)^{-\vartheta}+cN\sum_{n\neq 0}(1+N\vert n\vert)^{-\vartheta}\\
&\leq cN(1+N\Vert t\Vert)^{-\vartheta}+cN^{1-\vartheta}\leq cN(  1+N\Vert t\Vert)^{-\vartheta}.
\end{align*}

Hence, for this $\Phi(N,n)$ assumption $(i)$ in Theorem \ref{T-2} holds with arbitrary $\vartheta$, and for fixed $\sigma$ and $\delta$ one can chose a $\vartheta$ that optimizes the estimates in Theorem \ref{T-2}. In particular, if $\delta/\sigma\geq 1/2$  one can choose $\vartheta=\delta/\sigma $, whereas if $\delta/\sigma<1/2$ one can choose 
$(\delta-d/2)/(\sigma-d)<\vartheta<1/2 $. 
\end{proof}

\section{Concluding remarks}\label{final_section}

\begin{remark}\label{R-1}
The above corollaries show that the assumptions in Theorem \ref{T-1} and Theorem \ref{T-2} are not void. We now show that the assumptions in in Theorem \ref{T-5} and Theorem \ref{T-6} are not void as well. Let us prove that for every $\vartheta >0$ there exists a positive weight $\Phi(N,n)$ which satisfies \eqref{condizione}, with the property that $n \rightarrow \Phi(N,n)$ has compact support for every $N$, and with the property that there exist constants $H>0$ and $K>0$ such that for every $t$ one has 
\[
H(1+N\Vert t\Vert)^{-\vartheta} \leq
 \sum _{n=-\infty}^{+\infty}\Phi(N,n)e^{2\pi int}
 \leq K(1+N\Vert t\Vert)^{-\vartheta}.
\]

Let $j$ be a positive integer, and let 
\[ 
F_N (t) = (1+N\Vert t\Vert)^{-\vartheta},\;\;\;\;\;\;
G_N(t) = N^{1-2j}\left(\frac{\sin(\pi N t)}{\sin(\pi t)}\right)^{2j},
\]
\[
F_N *G_N(t) = \int_{\mathbb{T}}F_N (t-s) G_N(s) ds.
\]

It is easily verified that $G_N(t)$ is a trigonometric polynomial of degree 
$(N-1)j$. Hence, the convolution $F_N *G_N(t)$ is a trigonometric polynomial as well. From the inequalities $2\Vert t\Vert \leq | \sin(\pi t ) | \leq \pi \Vert t \Vert$ one deduces that
\[
\begin{cases}
2^{2j} \pi^{-2j} N \leq  G_N (t) \leq 2^{-2j} \pi^{2j} N & \text{if } \Vert t\Vert \leq 1/(2N),\\
0 \leq G_N (t) \leq 2^{-2j} N^{1-2j} \Vert t \Vert ^{-2j} & \text{if } \Vert t\Vert \geq 1/(2N).
\end{cases}
\]

It follows that there exist constants $C>c>0$ such that for every $N$ one has \[c\leq  \int_{\mathbb{T}} G_N(t) dt \leq C.\]

In order to estimate $F_N *G_N(t) $ from below, observe that for every $t$,
\[
\int_{\mathbb{T}}F_N (t-s) G_N (s) ds
\geq 2^{2j} \pi^{-2j} N \int_{\left\{\| s \| < \frac{1}{2N} \right\} }  
(1+N\Vert t-s \Vert)^{-\vartheta} ds
\geq c (1+N\Vert t \Vert)^{-\vartheta} .
\]

In order to estimate $F_N *G_N(t) $ from above, observe that for every $t$,
\[
\int_{\mathbb{T}} F_N (t-s) G_N (s) ds \leq 
\sup_{s\in \mathbb{T}} \left\{
F_N( t-s ) \right\}
\int_{\mathbb{T}} G_N(t) dt \leq c.
\]

Moreover, if $\Vert t\Vert \geq \/1/N$ then 
\begin{align*}
 \int_{\mathbb{T}}F_N (t-s) G_N (s) ds 
& \leq 2^{-2j} \pi^{2j} N 
\left(1+N\left( \Vert t \Vert -\dfrac{1}{2N} \right) \right)^{-\vartheta} \int_{\left\{\| s \| < \frac{1}{2N} \right\} }  ds \\
&+ 2^{-2j} N^{1-2j} 
\left(1+\dfrac{N \Vert t \Vert}{2} \right)^{-\vartheta}
\int _{ \left\{ \frac{1}{2N} \leq \| s \| < \frac{\Vert t\Vert}{2} \right\} } \Vert s \Vert ^{-2j}ds \\
& + 2^{-2j} N^{1-2j} \int _{ \left\{ \frac{\Vert t\Vert }{2} \leq \| s \| \leq \frac{1}{2} \right\} }  \Vert s \Vert ^{-2j}ds 
\\
&
\leq c (N\Vert t \Vert)^{-\vartheta} + c (N\Vert t \Vert)^{-\vartheta} +c (N\Vert t \Vert)^{1-2j } .
\end{align*}

Hence, if $2j-1 \geq \vartheta $ then for some positive constants $c$ and $C$ independent of $N$ and for every $t$ one has 
\[
c (1+N\Vert t \Vert)^{-\vartheta} \leq
\int_{\mathbb{T}}F_N (t-s) G_N (s) ds
\leq C (1+N\Vert t \Vert)^{-\vartheta} .
\]

Finally, define $\Phi(N,n)$ as the Fourier transform of 
$ \left( F_N *G_N(0) \right)^{-1} F_N *G_N(t)$,
\[
\Phi(N,n) = \left( F_N *G_N(0) \right)^{-1} 
\int_{\mathbb{T}} F_N *G_N(t) e^{-2\pi i nt}dt.
\]
\end{remark}

\bigskip

\begin{remark}\label{delta_sharp_2}
Observe that we cannot apply Theorem \ref{T-5} and Theorem \ref{T-6} to Corollary \ref{C-1} and Corollary \ref{C-2}, since in these corollaries $\sum _{n=-\infty}^{+\infty}\Phi(N,n)e^{2\pi int}$ vanishes in many points.
However, the ranges of indexes $\delta$ of the Sobolev class $W^{\delta, 2}(\mathbb T^d)$ in Corollary \ref{C-1}, and in Corollary \ref{C-2} at least in dimension one, seem to be essentially sharp as well. 
In Corollary \ref{C-1} with $\vartheta=1$ the assumption $\delta>d\vartheta-d/2$ in Theorem \ref{T-1} becomes $\delta>d/2$. As already observed, this assumption is necessary since the Sobolev spaces $W^{\delta,2}(\mathbb{T}^d)$ with $\delta\leq d/2$ contain unbounded functions. In Corollary \ref{C-2} with $\vartheta=2$ the assumption $\delta>d\vartheta-d/2$ becomes $\delta>3d/2$. At least, in dimension $d=1$ one can prove that this range is essentially sharp, in the sense that it cannot be replaced by any index $\delta<3/2$. Indeed, for every $\delta<3/2$ there exists a function $f(x)\in W^{\delta,2}(\mathbb{T})$ such that 
\[\limsup_{N\to+\infty} \left\{N^2\sup_{x\in\mathbb{T}}\left\{\left|\mathcal{D}^{\Phi,\alpha}_{N}f(x)\right|\right\} \right\}=+\infty\]
for almost every $\alpha$.
In order to show that this is true, observe that 
\begin{equation}\label{below_estimate}
N^2\sup_{x\in\mathbb{T}}\left\{\left|\mathcal{D}^{\Phi,\alpha}_{N}f(x)\right|\right\}\geq \left|\mathcal{D}^{\Phi,\alpha}_{N}f(0)\right| =\left|\sum_{m\neq 0} \dfrac{\sin^{2}(\pi Nm\alpha)}{\sin^{2}(\pi m\alpha)}\widehat{f}(m)\right|.
\end{equation}

In Petersen \cite{Petersen} it is proved that if $0<\alpha,\beta<1$ the following are equivalent:
\begin{enumerate}
\item[(i)] $\beta\in \mathbb{Z}\alpha$ (mod 1);
    \item[(ii)] $\sup\limits_{N\geq 1}\left\{ \displaystyle\sum_{m=1}^{+\infty} \dfrac{\|m\beta\|^2}{m^2}\dfrac{\|Nm\alpha\|}{\|m\alpha\|^2} \right\}<+\infty$.
\end{enumerate}

It is easy to verify that (ii) is also equivalent to 
\begin{enumerate}
    \item[(iii)] $\sup\limits_{N\geq 1}\left\{ \displaystyle\sum_{m=1}^{+\infty} \dfrac{\|m\beta\|^2}{m^2}\dfrac{\sin^{2}(\pi Nm\alpha)}{\sin^{2}(\pi m\alpha)}\right\}<+\infty$.
\end{enumerate}

Let $\beta\in (0,1)$ be an algebraic number and set 
\[f(x)=\sum_{m\neq 0} \dfrac{\|m\beta\|^2}{m^2}e^{2\pi imx}. \]

Such a function $f(x)$ belongs to $W^{\delta,2}(\mathbb{T})$ for every $\delta<3/2$.
Since for every transcendental number $\alpha\in (0,1)$ condition (i) does not holds, then (iii) does not hold as well. This is exactly what we wanted to prove thanks to \eqref{below_estimate} and the fact that almost every $\alpha\in(0,1)$ is a transcendental number.  

%At last, we prove the optimality of the range for $\delta$ in Theorem \ref{T-2} . When $\vartheta=2$ such a range is $\delta>2\sigma$. For $f(x)=(1+|m|^2)^{-\delta/2}e^{2\pi im\cdot x}$ one has
%\[
%N^2\mathcal D^{\Phi,\alpha}_N f(x)=(1+|m|^2)^{-\delta/2}\left(\frac{\sin(\pi Nm\cdot\alpha)}{\sin (\pi m\cdot\alpha)}\right)^2 e^{2\pi i m\cdot x}
%\]
%and the conclusion follows arguing as in Remark \ref{delta_sharp_1}.
\end{remark}

\begin{remark} 
It is curious to compare the above results on the speed of convergence in
ergodic theorems with the approximation properties of Fourier series. Whereas our results suggest that stronger summation methods guarantee faster convergence, the approximation properties of partial sums and F\'ejer means of Fourier series seem to go in the opposite direction. Assume $d=1$ and denote by $\mathcal{S}_{N}f(x)  $ and $\mathcal{F}_{N}f(x)  $ the partial sums and the arithmetic means of the partial sums of the Fourier expansion
of a function $f(x)$,
\[
\mathcal{S}_{N}f(x)  =\sum _{m=-N}^{+N}\widehat{f}(m)  e^{2\pi im\cdot x},\qquad
\mathcal{F}_{N}f(x)  =\sum _{m=-N}^{+N}\Big(  1-\dfrac{\vert m\vert }{N+1}\Big)  \widehat{f}(m)  e^{2\pi im\cdot x}  .
\]

The partial sums $\mathcal{S}_{N}f(x)  $ may not converge, but the approximation is close to optimal. Indeed, if $\|S_N\|$ denotes the operator norm of the partial sums, the Lebesgue constant, if $\mathcal{E}_{N}(f)$ denotes the best approximation in the supremum norm of $f(x)$ with trigonometric polynomials of degree at most $N$, and if $P_{N}(x)$ is the trigonometric polynomial of best approximation, then  
\begin{align*}
\sup_{x\in\mathbb{T}}\left\{\vert \mathcal{S}_{N}f(x)  -f(x) \vert \right\} &=\sup_{x\in\mathbb{T}}\left\{\vert \mathcal{S}_{N}(f-P_N)(x) -(f(x)-P_N(x)) \vert \right\}\\&  \leq (\|S_N\|+1)\sup_{x\in\mathbb{T}}\{|f(x)-P_N(x)|\}\leq  c\mathcal{E}_{N}(f)  \log(1+N)  .
\end{align*}

Finally, the means $\mathcal{F}_{N}f(x)  $ always converge, but the approximation is never better than $c/N$,
\[
\sup_{x\in\mathbb{T}}\left\{\vert \mathcal{F}_{N}f(x)  -f(x)\vert \right\}  \geq\int_{\mathbb{T}}\vert \mathcal{F}_{N}f(x)  -f(x)  \vert dx\geq\dfrac{\vert m\vert }{N+1}\vert\widehat{f}(m)\vert .
\]

In particular, the partial sums may converge faster than the F\'ejer means.
\end{remark}

\section{Appendix}\label{appendix}
In this appendix we deal with logarithmic means. Such means are defined by the weights 
\[ \Phi(N,n) =\begin{cases} \left(\displaystyle \sum\limits_{m=1}^{N}\dfrac{1}{m} \right)^{-1} \dfrac{1}{n} & \text{if } 1\leq n\leq N,\\  0 & \text{otherwise},\end{cases}\] 
  and the associated logarithmic discrepancy is 
 \[
\mathcal D^{\Phi,\alpha}_N f(x)=
\left(\sum _{n=1}^{N} \dfrac{1}{n} \right)^{-1}\sum_{n=1}^{N}\dfrac{f(x+n\alpha)}{n}  -\int_{\mathbb{T}^{d}}f(y)dy.
\]
 
See \cite[Section 2.2]{DT} for references about these means and discrepancy.
Although Theorem \ref{T-1} and Theorem \ref{T-2} do not immediately apply in this setting,
due to the fact that the assumption $(i)$ on the kernels $\sum _{n=-\infty}^{+\infty}\Phi(N,n)e^{2\pi int}$ is not satisfied, the proofs can be adapted to obtain some analogues of the above results.

\begin{Theorem}\label{T-7}
 If the function $f(x)$ has an absolutely convergent Fourier expansion, 
 $\sum_{m\in\mathbb Z}|\widehat f(m)|<+\infty$, then, for almost every $\alpha$, there exists a positive constant $c(f,\alpha)$ such that, for every positive integer $N$, one has
\[
\sup_{x\in\mathbb{T}^{d}}\left\{\left|\mathcal D^{\Phi, \alpha}_N f(x) \right|\right\}\leq 
c(f,\alpha) \log^{-1}(1+N).
\]
\end{Theorem}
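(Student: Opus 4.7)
The plan is to run the Fourier-series argument used for Theorem \ref{T-1}, replacing the pointwise kernel bound $(1+N\|t\|)^{-\vartheta}$ by a logarithmic one tailored to the logarithmic means. Write $H_N=\sum_{n=1}^N 1/n$ and
\[
K_N(t)=\sum_{n=-\infty}^{+\infty}\Phi(N,n)e^{2\pi int}=\frac{1}{H_N}\sum_{n=1}^N \frac{e^{2\pi int}}{n}.
\]
Since $f$ has an absolutely convergent Fourier expansion, for every irrational $\alpha$ one has the pointwise identity
\[
\mathcal D^{\Phi,\alpha}_Nf(x)=\sum_{m\in\mathbb Z^d\setminus\{0\}}K_N(m\cdot\alpha)\,\widehat f(m)\,e^{2\pi i m\cdot x},
\]
so the problem reduces to a good uniform bound on $|K_N(t)|$ for $t\notin\mathbb Z$.

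The first step, and the main technical obstacle, is to prove that there exists $c>0$ such that for every $t\in\mathbb R\setminus\mathbb Z$ and every $N\geq 1$
\[
|K_N(t)|\leq \frac{c}{\log(1+N)}\bigl(1+\log_+(1/\|t\|)\bigr).
\]
I would obtain this by Abel summation applied to $\sum_{n=1}^N e^{2\pi int}/n$, exploiting the Dirichlet-type estimate $\bigl|\sum_{k=1}^n e^{2\pi ikt}\bigr|\leq \min\{n,\,1/(2\|t\|)\}$. Splitting the Abel sum at $n\approx 1/(2\|t\|)$, the initial range contributes $O(\log(1+1/\|t\|))$ while the tail and boundary term are $O(1)$; dividing by $H_N\sim \log(1+N)$ yields the claim. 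This is a classical computation, essentially reflecting the fact that $\sum_{n=1}^{\infty} e^{2\pi int}/n=-\log(1-e^{2\pi it})$.

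Given this kernel estimate, the proof concludes exactly as in Theorem \ref{T-1}. Summing over $m$ gives
\[
\sup_{x\in\mathbb T^d}\bigl|\mathcal D^{\Phi,\alpha}_Nf(x)\bigr|\leq \frac{c}{\log(1+N)}\sum_{m\in\mathbb Z^d\setminus\{0\}}|\widehat f(m)|\bigl(1+\log_+(1/\|m\cdot\alpha\|)\bigr),
\]
and one defines $c(f,\alpha)$ as the Fourier sum on the right. By Lemma \ref{L-1}, for each $m\neq 0$ the function $\alpha\mapsto \log_+(1/\|m\cdot\alpha\|)$ has the same distribution on $\mathbb T^d$ as $t\mapsto \log_+(1/\|t\|)$ on $\mathbb T$, so
\[
\int_{\mathbb T^d}\log_+(1/\|m\cdot\alpha\|)\,d\alpha=\int_{\mathbb T}\log_+(1/\|t\|)\,dt<+\infty,
\]
a finite constant independent of $m$. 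Tonelli then gives $\int_{\mathbb T^d}c(f,\alpha)\,d\alpha\leq C\sum_{m\in\mathbb Z^d}|\widehat f(m)|<+\infty$, so $c(f,\alpha)<+\infty$ for almost every $\alpha$, which is exactly the desired conclusion. Everything besides the kernel estimate is an application of machinery already developed in the paper.
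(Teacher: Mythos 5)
Your proposal is correct and follows essentially the same route as the paper: the paper's Lemma \ref{L-4} establishes exactly your kernel bound $|K_N(t)|\leq c\log(1+\Vert t\Vert^{-1})/\log(1+N)$ by the same Abel-summation-and-splitting argument, and the proof of Theorem \ref{T-7} then sums over $m$ and integrates in $\alpha$ via Lemma \ref{L-1} and Tonelli, just as you do. The only cosmetic difference is writing $1+\log_+(1/\Vert t\Vert)$ in place of $\log(1+\Vert t\Vert^{-1})$, which changes nothing up to constants.
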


\begin{Theorem}\label{T-8}
If $\alpha$ is not a Liouville vector, that is, if there exist positive constants $H$ and $\sigma $ such that $\Vert \alpha\cdot m\Vert \geq H\vert m\vert^{-\sigma}$ for every $m\in\mathbb{Z}^{d}\setminus\{0\}$, then there exists a positive constant $c=c(d,H,\sigma)$, such that for every positive integer $N$, 
\[
\sup_{x\in\mathbb{T}^{d}}\left\{\left|\mathcal D^{\Phi, \alpha}_N f(x) \right|\right\}\leq 
c \log^{-1}(1+N) 
\sum_{m\in\mathbb Z}|\widehat f(m)| \log\left(1+|m| \right).
\]
\end{Theorem}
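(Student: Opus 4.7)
The plan is to mimic the Fourier-expansion strategy of Theorem \ref{T-2}, replacing the role of assumption $(i)$ on the Fourier transform of $\Phi(N,\cdot)$ by a tailor-made pointwise bound for the logarithmic kernel. First, since the hypothesis $\sum_{m\in\mathbb Z^d}|\widehat f(m)|\log(1+|m|)<+\infty$ implies $\sum_{m\in\mathbb Z^d}|\widehat f(m)|<+\infty$, the Fourier series of $f$ converges absolutely and, together with the compact support of $n\mapsto\Phi(N,n)$, gives the pointwise identity
\[
\mathcal D^{\Phi,\alpha}_N f(x)=\sum_{m\in\mathbb Z^d\setminus\{0\}}\widehat f(m)\,e^{2\pi im\cdot x}\,L_N^{-1}S_N(m\cdot\alpha),
\]
where $L_N=\sum_{n=1}^N 1/n\simeq\log(1+N)$ and $S_N(t)=\sum_{n=1}^N e^{2\pi int}/n$. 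Taking absolute values, the problem reduces to a uniform bound on $|S_N(m\cdot\alpha)|/L_N$.

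The key technical step is the one-dimensional estimate
\[
|S_N(t)|\leq c\bigl(1+\log\min(N,1/\|t\|)\bigr), \qquad t\notin\mathbb Z,\ N\geq 1.
\]
To prove it I would split the sum at $N_0=\min(N,\lceil 1/\|t\|\rceil)$. The head $\sum_{n=1}^{N_0}e^{2\pi int}/n$ is bounded trivially by $\sum_{n=1}^{N_0}1/n\leq c\log(2+N_0)$. For the tail, when $N>N_0$, I would apply Abel summation against the Dirichlet partial sums $D_n(t)=\sum_{k=1}^n e^{2\pi ikt}$, which satisfy the classical bound $|D_n(t)|\leq 1/(2\|t\|)$; the resulting telescoping expression is of order $1/(\|t\|N_0)=O(1)$, by the very choice of $N_0$.

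Combining the kernel estimate with the Diophantine hypothesis $\|m\cdot\alpha\|\geq H|m|^{-\sigma}$, which gives $1/\|m\cdot\alpha\|\leq H^{-1}|m|^\sigma$, I obtain
\[
L_N^{-1}|S_N(m\cdot\alpha)|\leq c(d,H,\sigma)\,\frac{\log(1+|m|)}{\log(1+N)}
\]
for every $m\in\mathbb Z^d\setminus\{0\}$. Plugging this into the Fourier expansion and taking the supremum over $x\in\mathbb T^d$ yields the desired bound
\[
\sup_{x\in\mathbb T^d}\bigl|\mathcal D^{\Phi,\alpha}_N f(x)\bigr|\leq \frac{c}{\log(1+N)}\sum_{m\in\mathbb Z^d}|\widehat f(m)|\log(1+|m|).
\]

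The main obstacle is establishing the uniform kernel bound on $S_N(t)$: because the Fourier transform of $\Phi(N,\cdot)$ decays only logarithmically in $1/\|t\|$ instead of polynomially, the polynomial assumption $(i)$ of Theorem \ref{T-2} is not available, and one must exploit the cancellation in the Dirichlet sums via Abel summation at the right scale $N_0$. This logarithmic, rather than polynomial, decay is precisely what forces both the slow $\log^{-1}(1+N)$ rate on the left and the additional logarithmic weight $\log(1+|m|)$ on the right.
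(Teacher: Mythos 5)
Your proposal is correct and follows essentially the same route as the paper: the paper's Lemma \ref{L-4} is exactly your kernel bound $|S_N(t)|/L_N\leq c\min\{1,\log(1+\|t\|^{-1})/\log(1+N)\}$, proved likewise by Abel summation against the Dirichlet partial sums, and the conclusion is then obtained by inserting the Diophantine bound $\|m\cdot\alpha\|^{-1}\leq H^{-1}|m|^{\sigma}$ into the absolutely convergent Fourier expansion, just as you do.
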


In particular, the above theorems apply to functions in Sobolev classes $W^{\delta,2}(\mathbb{T}^d)$ with $\delta>d/2$.
The main ingredient in the proofs of both theorems is an estimate for the kernels $\sum _{n=-\infty}^{+\infty}\Phi(N,n)e^{2\pi int}$.

\begin{lemma}\label{L-4} For every $t \in \mathbb{T}$,
\[
\left| \left(\sum _{n=1}^{N} \dfrac{1}{n} \right)^{-1}\sum_{n=1}^{N} \dfrac{e^{2\pi i n t}}{n}  \right|
\leq \min \left\{1, c \dfrac{\log \left( 1 + \Vert t \Vert ^{-1} \right)}{\log \left( 1 + N \right)} \right\}.
\]

Moreover, if $N$ is large enough and if $\Vert t \Vert \geq c/N$, the reverse inequality holds true as well.
\end{lemma}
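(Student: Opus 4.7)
The plan is to prove the upper bound by summation by parts against the Dirichlet kernel, and the reverse inequality by comparing the finite sum with its classical infinite-series limit $-\log(1-e^{2\pi it})$ together with a sharp tail estimate.

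For the upper bound, let $D_n(t)=\sum_{k=1}^n e^{2\pi ikt}$, which satisfies the standard estimate $|D_n(t)|\le\min(n,1/(2\Vert t\Vert))$. Abel summation gives
\[
\sum_{n=1}^N\frac{e^{2\pi int}}{n}=\frac{D_N(t)}{N}+\sum_{n=1}^{N-1}\frac{D_n(t)}{n(n+1)},
\]
and splitting the right-hand sum at $M=\min(N,\lceil 1/(2\Vert t\Vert)\rceil)$ and using the two bounds on $D_n$ in their respective ranges yields
\[
\Bigl|\sum_{n=1}^N\frac{e^{2\pi int}}{n}\Bigr|\le C\log(1+\Vert t\Vert^{-1}),
\]
after absorbing additive constants via $\log(1+\Vert t\Vert^{-1})\ge\log 3$. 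Dividing by $\sum_{n=1}^N 1/n\ge\log(1+N)$ gives the $\log(1+\Vert t\Vert^{-1})/\log(1+N)$ branch of the $\min$; the $\le 1$ branch follows from the trivial pointwise estimate $|\sum e^{2\pi int}/n|\le\sum 1/n$.

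For the reverse inequality, I would start from the Mercator-type identity
\[
\sum_{n=1}^\infty\frac{e^{2\pi int}}{n}=-\log(1-e^{2\pi it})=-\log(2\sin(\pi\Vert t\Vert))+i\pi\bigl(\tfrac12-\Vert t\Vert\bigr),
\]
valid for $t\notin\mathbb Z$. The same Abel argument applied to $\sum_{n=N+1}^\infty$, using $|D_n-D_N|\le C/\Vert t\Vert$, yields the tail estimate
\[
\Bigl|\sum_{n=N+1}^\infty\frac{e^{2\pi int}}{n}\Bigr|\le\frac{C}{N\Vert t\Vert}.
\]
Next, I would establish the uniform comparison $|{-}\log(1-e^{2\pi it})|\ge c_1\log(1+\Vert t\Vert^{-1})$ on $(0,1/2]$: the squared modulus equals $\log^2(2\sin(\pi\Vert t\Vert))+\pi^2(\tfrac12-\Vert t\Vert)^2$, the ratio with $\log^2(1+\Vert t\Vert^{-1})$ tends to $1$ as $\Vert t\Vert\to 0^+$, and on any subinterval bounded away from $0$ both sides are continuous and strictly positive, so a compactness argument produces $c_1>0$. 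Combining these, for $\Vert t\Vert\ge c_0/N$ with $c_0$ chosen large depending on $c_1$ and $C$,
\[
\Bigl|\sum_{n=1}^N\frac{e^{2\pi int}}{n}\Bigr|\ge c_1\log(1+\Vert t\Vert^{-1})-\frac{C}{c_0}\ge\frac{c_1}{2}\log(1+\Vert t\Vert^{-1})
\]
(using $\log(1+\Vert t\Vert^{-1})\ge\log 3$ since $\Vert t\Vert\le 1/2$), and dividing by $\sum_{n=1}^N 1/n\le 2\log(1+N)$ (valid for $N\ge 2$) gives the reverse inequality.

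The hard step is the uniform comparison $|{-}\log(1-e^{2\pi it})|\asymp\log(1+\Vert t\Vert^{-1})$: the real part $-\log(2\sin(\pi\Vert t\Vert))$ vanishes at $\Vert t\Vert=1/6$, while the imaginary part $\pi(\tfrac12-\Vert t\Vert)$ vanishes at $\Vert t\Vert=1/2$, so neither component alone controls the modulus. Retaining both parts of the complex logarithm and observing that they never vanish simultaneously is exactly what makes the uniform lower bound go through; everything else is routine summation by parts.
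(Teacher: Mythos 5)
Your proposal is correct. The upper bound is essentially the paper's argument: both proofs run Abel summation against the Dirichlet kernel, use $|D_n(t)|\le\min\{n,1/(2\Vert t\Vert)\}$, and split the resulting sum at $n\approx 1/\Vert t\Vert$ (the paper organizes this as three pieces $I+I\!I+I\!I\!I$ because it wants to reuse the decomposition for the lower bound, but the content is the same). Where you genuinely diverge is the reverse inequality. The paper stays inside its decomposition: it observes that for $0<t\le 1/2$ and $n\le 1/(2|t|)$ each term $\frac{1}{n+1}\frac{\sin(\pi nt)}{n\sin(\pi t)}$ is bounded below by $\frac{2}{\pi}\frac{1}{n+1}$, so the truncated piece of $I\!I\!I$ is a genuinely positive main term of size $c\log(1+|t|^{-1})$, while $I$, $I\!I$ and the rest of $I\!I\!I$ are $O(1)$. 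You instead compare the partial sum with the Mercator limit $-\log(1-e^{2\pi it})$, prove the tail bound $C/(N\Vert t\Vert)$, and then establish the uniform two-sided comparison $|{-}\log(1-e^{2\pi it})|\asymp\log(1+\Vert t\Vert^{-1})$ by checking that the real and imaginary parts of the logarithm never vanish simultaneously on $(0,1/2]$ and invoking compactness. Both routes are valid; the paper's is more self-contained (no closed-form identity needed), while yours is arguably cleaner on the lower bound for $\Vert t\Vert$ bounded away from $0$ — there the paper's ``positive main term minus additive constants'' bookkeeping requires the implied constants to be tracked with some care, whereas your non-vanishing argument for the complex logarithm handles that regime transparently and also yields the sharp constant as $\Vert t\Vert\to 0$.
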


\begin{proof}
% It suffices to analize the partial sums of the Fourier expansion in $0 <t<1$, 
% \begin{align*}
% \sum _{n=1}^{+\infty} \dfrac{e^{2\pi i t}}{n} 
% &=\sum _{n=1}^{+\infty} \dfrac{\cos(2\pi  t)}{n} +i \sum _{n=1}^{+\infty} \dfrac{\sin(2\pi  t)}{n} \\
% &=- \log(2) - \log\left( \sin(\pi  t ) \right) + i \pi \left( 1/2 - t \right).
% \end{align*}

A direct and explicit proof goes as follows. 
The inequality $\leq 1$ is obvious. In order to prove the other inequality it suffices to assume that $N>1$ and $ |t| \leq 1/2$. An integration by parts gives
\begin{align*}
&\sum_{n=1}^{N} \dfrac{e^{2\pi i n t}}{n} 
= \dfrac{1}{N} \sum_{n=1}^{N} e^{2\pi i n t} +
\sum_{n=1}^{N-1} \left( \dfrac{1}{n} - \dfrac{1}{n+1} \right) \sum_{m=1}^{n}e^{2\pi i m t}
\\
& 
= \dfrac{\sin\left(\pi N t \right)}{N \sin\left(\pi t \right)} e^{i \pi (N+1) t} + 
\sum_{n=1}^{N-1}  \dfrac{1}{n+1}  
\dfrac{\sin\left(\pi n t \right)}{n \sin\left(\pi t \right)} e^{i \pi (n+1) t}\\
&
=  \dfrac{\sin\left(\pi N t \right)}{N \sin\left(\pi t \right)} e^{i \pi (N+1) t} 
+ \sum_{n=1}^{N-1}  \dfrac{1}{n+1}  
\dfrac{\sin\left(\pi n t \right)}{n \sin\left(\pi t \right)} \left( e^{i \pi (n+1) t} -1 \right)
+ \sum_{n=1}^{N-1}  \dfrac{1}{n+1}  
\dfrac{\sin\left(\pi n t \right)}{n \sin\left(\pi t \right)}\\
&=I+I\!I+I\!I\!I.
\end{align*}

The inequalities $2 \Vert t \Vert \leq \left\vert \sin\left( \pi t\right)  \right\vert \leq \pi \Vert t \Vert $ imply that
\[
\left\vert \frac{\sin\left( \pi n t \right) }{ n \sin \left(  \pi t \right) } \right\vert \leq
\min \left\{\dfrac{\pi}{2},\dfrac{1}{2n \Vert t \Vert } \right\},
\]
from which an estimate for the term $I$ is immediately obtained.
In order to estimate $I\!I$ one can separately consider the sum where the index $n$ varies in the set $\left\{ 1\leq n \leq N-1,\; n \leq 1/|t| \right\}$
and the sum where the index varies in the set 
$\left\{ 1\leq n \leq N-1,\; n > 1/|t| \right\}$.
The latter set is empty if $|t|<1/(N-1)$. Otherwise, there is a uniform bound. Indeed, 
\[
\left| \sum_{ 1\leq n \leq N-1,\; n > 1/|t| }  \dfrac{1}{n+1}  
\dfrac{\sin\left(\pi n t \right)}{n \sin\left(\pi t \right)} \left( e^{i \pi (n+1) t} -1 \right) \right| 
\leq \dfrac{1}{|t|}
\sum_{n>1/|t| }  \dfrac{1}{n(n+1)} \leq c.
\]

The inequality $\left| e^{i \pi (n+1) t} -1 \right| \leq \pi (n+1) |t|$ implies that also the sum over the $\left\{ 1\leq n \leq N-1,\; n \leq 1/|t| \right\}$ is uniformly bounded. Indeed,
\[
\left| \sum_{ 1\leq n \leq N-1,\; n \leq 1/|t| }  \dfrac{1}{n+1}  
\dfrac{\sin\left(\pi n t \right)}{n \sin\left(\pi t \right)} \left( e^{i \pi (n+1) t} -1 \right) \right| 
\leq \dfrac{\pi^2}{2}|t|
\sum_{n \leq 1/|t| }  1 \leq c.
\]

In order to estimate the last term $I\!I\!I$, one considers separately the sum over the set of indexes $\left\{ 1\leq n \leq N-1,\; n \leq 1/(2|t|) \right\}$
and the sum over the set of indexes
$\left\{ 1\leq n \leq N-1,\; n > 1/(2|t|) \right\}$.
The sum over this latter set is uniformly bounded,
\[
\left| \sum_{ 1\leq n \leq N-1,\; n > 1/(2|t|) }  \dfrac{1}{n+1}  
\dfrac{\sin\left(\pi n t \right)}{n \sin\left(\pi t \right)}  \right| 
\leq \dfrac{1}{2|t|}
\sum_{n>1/(2|t|) }  \dfrac{1}{n(n+1)} \leq c.
\]

The sum over the indexes $\left\{ 1\leq n \leq N-1,\; n \leq 1/(2|t|) \right\}$ is bounded by 
\[
\left| \sum_{ 1\leq n \leq N-1,\; n \leq 1/(2|t|) }  \dfrac{1}{n+1}  
\dfrac{\sin\left(\pi n t \right)}{n \sin\left(\pi t \right)}  \right| \leq 
\dfrac{\pi}{2} \sum_{ n \leq 1/(2|t|) }  \dfrac{1}{n+1}  
\leq C \log\left(1+|t|^{-1} \right).
\]

Notice that if $| t | \geq 1/(2N-2)$, then the reverse inequality holds true,
\[
\left| \sum_{ 1\leq n \leq N-1,\; n \leq 1/(2|t|) }  \dfrac{1}{n+1}  
\dfrac{\sin\left(\pi n t \right)}{n \sin\left(\pi t \right)}  \right| \geq 
\dfrac{2}{\pi} \sum_{ n \leq 1/(2|t|) }  \dfrac{1}{n+1}  
\geq c \log\left(1+|t|^{-1} \right).
\]
\end{proof}

\begin{proof}[Proof of Theorem \ref{T-7}] 
As in the proof of Theorem \ref{T-1}, 
\begin{align*}
\mathcal D^{\Phi, \alpha}_Nf(x)& =
\left(\sum _{n=1}^{N} \dfrac{1}{n} \right)^{-1}\sum_{n=1}^{N}\dfrac{f(x+n\alpha)}{n}  -\int_{\mathbb{T}^{d}}f(y)dy\\
&
= \left( \sum _{n=1}^{N} \dfrac{1}{n} \right)^{-1} \sum _{m\in\mathbb{Z}^{d}\setminus\{ 0\}} \left( \sum _{n=1}^{N} \dfrac{e^{2\pi i n m\cdot\alpha}}{n} \right) \widehat{f}(m)e^{2\pi im\cdot x}.
\end{align*}

Hence, by Lemma \ref{L-4},
\[
\left| \mathcal D^{\Phi, \alpha}_Nf(x) \right|
\leq c \log^{-1} (1+N) \sum _{m\in\mathbb{Z}^{d}\setminus\{ 0\}} | \widehat{f}(m) | \log \left( 1 + \Vert m \cdot \alpha \Vert ^{-1} \right).
\]

And, by Lemma \ref{L-1},
\begin{align*}
& \int_{\mathbb{T}^{d}} \left( 
\sum _{m\in\mathbb{Z}^{d}\setminus\{ 0\}} | \widehat{f}(m) | \log \left( 1 + \Vert m \cdot \alpha \Vert ^{-1} \right) \right) d\alpha\\&= \left( \int_{\mathbb{T}} \log( 1 + \Vert t \Vert ^{-1}) dt \right) \sum _{m\in\mathbb{Z}^{d}\setminus\{ 0\}} | \widehat{f}(m) |.
\end{align*}

Finally, $ \displaystyle \int_{\mathbb{T}} \log \left( 1 + \Vert t \Vert ^{-1} \right) dt \ < +\infty$.
\end{proof}
 
\begin{proof}[Proof of Theorem \ref{T-8}] 
As in the proof of Theorem \ref{T-7}, under the Diophantine assumption $\Vert \alpha\cdot m\Vert \geq H\vert m\vert^{-\sigma}$,
\begin{align*}
\left| \mathcal D^{\Phi, \alpha}_Nf(x) \right|
& \leq c \log^{-1} (1+N) \sum _{m\in\mathbb{Z}^{d}\setminus\{ 0\}} | \widehat{f}(m) | \log \left( 1 + \Vert m \cdot \alpha \Vert ^{-1} \right)\\
& \leq c \log^{-1} (1+N) \sum _{m\in\mathbb{Z}^{d}\setminus\{ 0\}} | \widehat{f}(m) | \log \left( 1 + H^{-1} |m|^{\sigma} \right)\\
& \leq c \log^{-1} (1+N) \sum _{m\in\mathbb{Z}^{d}\setminus\{ 0\}} | \widehat{f}(m) | \log \left( 1 + |m| \right).
\end{align*}
\end{proof}

\bibliography{WeylSums-bib}
 \bibliographystyle{acm}

\end{document}